\documentclass[12pt]{amsart}

\usepackage{fancybox,color, mathtools, array}

\newcommand{\kom}[1]{}

\renewcommand{\kom}[1]{{\bf [#1]}}

\definecolor{gruen}{cmyk}{1.0,0.2,0.7,0.07}

\usepackage{latexsym}
\usepackage{color}
\usepackage{amssymb}
\usepackage{mathrsfs}
\usepackage{amsmath}
\usepackage{fancybox,color,graphicx}
\usepackage{enumerate}

\usepackage[active]{srcltx}
\usepackage[colorlinks]{hyperref}
\usepackage{hypernat}
\usepackage{setspace}

\usepackage{soul}


\addtolength{\hoffset}{-1.9cm} \addtolength{\textwidth}{4cm}
\addtolength{\voffset}{-1.5cm} \addtolength{\textheight}{2cm}
\setlength{\parskip}{5pt}
\setlength{\parindent}{5pt}
\hyphenation{Har-ju-leh-to Kos-ken-oja Va-ro-nen Pe-re}
\hyphenation{Die-ning Nek-vin-da Mar-tio Bur-stein Man-sour Ib-ra-gi-mov}
\hyphenation{Lat-va-la Hel-sin-ki Luk-kari}
\hyphenation{har-mo-nious}
\hyphenation{p-har-mo-nious}

 \newcommand{\eps}{{\varepsilon}}
 
 \def\1{\raisebox{2pt}{\rm{$\chi$}}}
 

\newcommand{\norm}[1]{\left|\left|#1\right|\right|}

\def\vint_#1{\mathchoice%
          {\mathop{\kern 0.2em\vrule width 0.6em height 0.69678ex depth -0.58065ex
                  \kern -0.8em \intop}\nolimits_{\kern -0.4em#1}}%
          {\mathop{\kern 0.1em\vrule width 0.5em height 0.69678ex depth -0.60387ex
                  \kern -0.6em \intop}\nolimits_{#1}}%
          {\mathop{\kern 0.1em\vrule width 0.5em height 0.69678ex
              depth -0.60387ex
                  \kern -0.6em \intop}\nolimits_{#1}}%
          {\mathop{\kern 0.1em\vrule width 0.5em height 0.69678ex depth -0.60387ex
                  \kern -0.6em \intop}\nolimits_{#1}}}
\def\vintslides_#1{\mathchoice%
          {\mathop{\kern 0.1em\vrule width 0.5em height 0.697ex depth -0.581ex
                  \kern -0.6em \intop}\nolimits_{\kern -0.4em#1}}%
          {\mathop{\kern 0.1em\vrule width 0.3em height 0.697ex depth -0.604ex
                  \kern -0.4em \intop}\nolimits_{#1}}%
          {\mathop{\kern 0.1em\vrule width 0.3em height 0.697ex depth -0.604ex
                  \kern -0.4em \intop}\nolimits_{#1}}%
          {\mathop{\kern 0.1em\vrule width 0.3em height 0.697ex depth -0.604ex
                  \kern -0.4em \intop}\nolimits_{#1}}}

\newcommand{\aveint}[2]{\mathchoice%
          {\mathop{\kern 0.2em\vrule width 0.6em height 0.69678ex depth -0.58065ex
                  \kern -0.8em \intop}\nolimits_{\kern -0.45em#1}^{#2}}%
          {\mathop{\kern 0.1em\vrule width 0.5em height 0.69678ex depth -0.60387ex
                  \kern -0.6em \intop}\nolimits_{#1}^{#2}}%
          {\mathop{\kern 0.1em\vrule width 0.5em height 0.69678ex depth -0.60387ex
                  \kern -0.6em \intop}\nolimits_{#1}^{#2}}%
          {\mathop{\kern 0.1em\vrule width 0.5em height 0.69678ex depth -0.60387ex
                  \kern -0.6em \intop}\nolimits_{#1}^{#2}}}

\newtheorem{theorem}{Theorem}[section]

\newtheorem{lemma}[theorem]{Lemma}
\newtheorem{proposition}[theorem]{Proposition}
\newtheorem{definition}[theorem]{Definition}

 \renewcommand{\div}{\operatorname{div}}
 

  \newcommand{\R}{\mathbb{R}}

 \newcommand{\essinf}{\operatorname{ess \, inf}}
 \newcommand{\esssup}{\operatorname{ess \, sup}}

\newcommand{\xintloo}[1]{\int\limits_{#1} \kern-18pt\raise4pt\hbox to7pt {\hrulefill}\ }   \numberwithin{equation}{section}

\newcommand{\ec}{\normalcolor}

\def\Xint#1{\mathchoice
   {\XXint\displaystyle\textstyle{#1}}%
   {\XXint\textstyle\scriptstyle{#1}}%
   {\XXint\scriptstyle\scriptscriptstyle{#1}}%
   {\XXint\scriptscriptstyle\scriptscriptstyle{#1}}%
   \!\int}
\def\XXint#1#2#3{{\setbox0=\hbox{$#1{#2#3}{\int}$}
     \vcenter{\hbox{$#2#3$}}\kern-.5\wd0}}

\def\dashint{\Xint-}

 \newcommand{\mup}{\mu_p^{\mathbb{H}}}
 \newcommand{\dz}{\,\mbox{d}z}
 \newcommand{\dy}{\,\mbox{d}y}
 \newcommand{\dt}{\,\mbox{d}t}
 
 \begin{document}
 \title[Natural $p$-Means]{Convergence of 
  Natural $p$-Means \\ for the $p$-Laplacian in the Heisenberg group}
 
 \author{Andr\'{a}s Domokos}
 \address{Andr\'{a}s Domokos
 	\hfill\break\indent
 	Department of Mathematics and Statistics
 	\hfill\break\indent
 	California State University Sacramento
 	\hfill\break\indent 6000 J Street, Sacramento, CA, 95819, USA
 	\hfill\break\indent \ec
 	{\tt domokos@csus.edu}}
 
  \author[J. Manfredi]{Juan J. Manfredi}
 \address{Juan J. Manfredi
 \hfill\break\indent
Department of Mathematics
\hfill\break\indent
University of Pittsburgh
\hfill\break\indent Pittsburgh, PA 15260, USA
\hfill\break\indent
{\tt manfredi@pitt.edu}}

\author{Diego Ricciotti}
\address{Diego Ricciotti
	\hfill\break\indent
	Department of Mathematics and Statistics
	\hfill\break\indent
	California State University Sacramento
	\hfill\break\indent 6000 J Street, Sacramento, CA, 95819, USA
	\hfill\break\indent \ec
	{\tt ricciotti@csus.edu}}

  \author[B. Stroffolini]{Bianca Stroffolini}
 \address{Bianca Stroffolini
 \hfill\break\indent
 Department of Electrical Engineering and Information Technology
\hfill\break\indent
University Federico II Napoli
\hfill\break\indent Napoli, 80125, Italy
\hfill\break\indent \ec
{\tt bstroffo@unina.it}}

 \date{\today}
\keywords{$p$-Laplacian, natural $p$-means, Dirichlet problem, Discrete approximations, Asymptotic mean value properties, Convergence, Generalized viscosity solutions, Heisenberg group}
\subjclass[2010]{35J92, 35D40, 49L20, 49L25, 35R02, 35B05, 35J62}
\maketitle
\begin{center}
\textit{Dedicated to the memory of Emmanuele DiBenedetto.} 
\end{center}
\begin{abstract} In this paper we prove uniform convergence of approximations to $p$-harmonic functions by using natural $p$-mean operators on bounded domains of the Heisenberg group $\mathbb{H}$ which satisfy an intrinsic exterior corkscrew condition. These domains include Euclidean $C^{1,1}$ domains.
 
\end{abstract}

\section{Introduction} 

Solutions  to a large class of elliptic and parabolic equations can be characterized by \linebreak asymptotic mean value properties (see for example \cite{MPR10, BCMR21}). Consider the case of $p$-harmonic functions for $1<p< \infty$. A smooth function with non-vanishing gradient satisfies at a point $x\in\mathbb{R}^n$ the $p$-harmonic equation 
$$\sum_{i,j=1}^n \left\{ \delta_{ij} + (p-2) \frac{u_{x_i}(x) u_{x_j}(x)}{|\nabla u(x)|^2} \right\} u_{x_i x_j}(x)=0,$$ if and only if 
\begin{equation}\label{expansion1}
u(x)= \frac{\alpha}2 \left(\sup_{B_{\eps}(x)} u + \inf_{B_{\eps}(x)} u\right)+\beta\, \dashint_{B_{\eps}(x)}u(y)\, dy+o(\eps^2),
\end{equation}
where $\alpha=\frac{p-2}{n+p}$ and  $\beta=\frac{n+2}{n+p}$.
For general viscosity solutions the same characterization prevails provided that we interpret \eqref{expansion1} in the viscosity sense. 
\par
The expansion \eqref{expansion1} suggests the consideration of solutions $u_\eps$ to the  Dynamic Programming Principle (DPP)
\begin{equation}\label{dpp1}
u_\eps(x)= \frac{\alpha}2 \left(\sup_{B_{\eps}(x)} u_\eps+   \inf_{B_{\eps}(x)} u_\eps \right)+\beta\, \dashint_{B_{\eps}(x)}u_\eps(y)\, dy, \end{equation} 
where $\Omega\subset\mathbb{R}^n$  is a bounded domain, the function $u_\eps\colon\Omega\to\mathbb{R}$ and $B_\eps(x)\subset \Omega$. Suppose that $g\colon\partial\Omega\to\mathbb{R}$ is continuous. In order to consider \eqref{dpp1} for all points $x\in\overline\Omega$ and for  all $0< \eps \leq 1$, consider the $\eps$-boundary strip 
$$\Gamma_{\eps} = \{x\in \mathbb{R}^n\setminus \Omega\,\colon\, \text{dist}(x, \Omega) \leq \eps\}.$$
We extend the function $g$ continuously to a function $G$ to this boundary strip and consider the problem
\begin{equation}\label{dpp2}
\left\{
\begin{array}{*3{>{\displaystyle}c}l} 
u_\eps(x) & = &  \frac{\alpha}2 \left( \sup_{B_{\eps}(x)} u_\eps +   \inf_{B_{\eps}(x)} u_\eps \right)+\beta\, \dashint_{B_{\eps}(x)}u_\eps(y)\, dy, &  x\in{\Omega},\\
u_\eps(x)&=& G(x),&  {x\in\Gamma_{\eps}}.
\end{array}
\right.
\end{equation}
Since we are requiring that $u_\eps = G$ in $\Gamma_\eps$,  the expression \eqref{dpp1} is now well defined for $x\in\overline{\Omega}$. 
When the domain $\Omega$ is Lipschitz, one can solve the Dirichlet problem \eqref{dpp2} and obtain a  family $\{u_\eps\}_{0<\eps\le 1}$ of functions such that $u_\eps\to u$ uniformly in $\overline{\Omega}$, where $u$ is the unique viscosity solution to the Dirichlet problem
\begin{equation}\label{dp1}
\left\{
\begin{array}{cccl}
\displaystyle\sum_{i,j=1}^n \left\{ \delta_{ij} + (p-2) \frac{u_{x_i} u_{x_j}}{|\nabla u|^2} \right\} u_{x_i x_j} & = & 0   , &  \text{  in  }{\Omega}  \\
u&=& g,&\text{  in   }\partial\Omega.
\end{array}
\right.
\end{equation}
We note that viscosity solutions of the problem \eqref{dp1} are also weak solutions,  when the homogeneous $p$-Laplacian is replaced by the variational
$p$-Laplacian, $\div\left(|\nabla u|^{p-2}\nabla u\right)=0$, \cite{JLM01}. For a mean value property that applies directly to the variational $p$-Laplacian see \cite{dTL21}. 
\par
The nonlinear mean value expression in the right-hand side of \eqref{dpp1} is a \textit{tug-of-war with noise} mean, suggested by connections with probability developed in  \cite{PSSW09} and  \cite{PS08}. \par

Let  $\Omega_{E}=\Omega\cup\Gamma_{1}$ and ${\mathcal B}(\overline{\Omega})$, ${\mathcal B}(\Omega_{E})$ be the class of bounded real measurable functions defined on $\overline{\Omega}$ and 
 $\Omega_{E}$, respectively.
\begin{definition}\label{def:M}
We say that an operator $A: {\mathcal B}(\Omega_E)\to {\mathcal B}(\overline{\Omega})$
is an \textit{average operator}  if it satisfies the following properties:
\begin{itemize}
\item{}(Stability) $\inf_{\{y\in \Omega_E\}} \phi(y)\le A[\phi](x)\le \sup_{\{y\in \Omega_E\}} \phi(y),\forall x\in\overline{\Omega}$;
\item{}(Monotonicity)If $\phi\le \psi$ in $\Omega_E$ then $A[\phi]\le A[\psi]$ in $\overline{\Omega}$;
\item{}(Affine invariance) $A[\lambda \phi+\xi]=\lambda A[\phi]+\xi, \forall \lambda>0, \forall \xi\in \mathbb{R}$.
\end{itemize}
\end{definition}
\begin{definition}\label{def:MVP}
We say that a family of averages $\{A_\eps\}_{\eps>0}$ satisfies the   \textbf{asymptotic mean value property (AMVP)} for the $p$-Laplacian if for every $\phi\in C^\infty(\Omega_E)$ such that $\nabla \phi \not=0$, we have
\begin{equation*} 
A_\eps[\phi](x)=\phi(x) +c\,  \eps^2\left(\Delta_p^H \phi(x)\right)+ o(\eps^2)
\end{equation*}
for some constant $c>0$ independent of $\eps$ and $\phi$,  and where the constant in $o(\eps^2)$ can be taken uniformly for all $x\in \overline{\Omega}$. 
 \end{definition}
 Associated to an average operator $A_\eps$, we have a dynamic programming principle (DPP) at scale $\eps$   given by
\begin{equation}\label{eq:expDPP1}
\left\{
\begin{array}{cccl}
u_\eps(x) & = & A_\eps [u_\eps] (x) & \text{ in } \Omega,\\
u_\eps(x)&=& G(x) & \text{ on } {\Gamma_{1}}.
\end{array}
\right.
\end{equation}
Later in Section \S \ref{22} we will discuss existence and uniqueness for the DPP \eqref{eq:expDPP1} associated to the natural $p$-means in the Heisenberg group. 

We will say that  a function $u \in {\mathcal B}(\Omega_{E})$ is a subsolution (resp. supersolution) of \eqref{eq:expDPP1} in $\Omega$ with boundary datum $G$, if $u\le  G $ (resp. $u\ge G$)  on $\Gamma_{1}$ and $u(x)\leq A_{\eps}[u] (x)$ (resp. $u(x)\geq A_{\eps}[u] (x)$) for  $x\in\Omega$.

 Consider the following conditions on the family of averages $\{A_\eps\}_{\eps>0}$:
\medskip
\par\noindent 
\textbf{Uniform Stability:}
\begin{equation}\label{as:stab}\tag{US}
\begin{split}
&\textup{For all $ \eps>0$ there exists $ u_\eps \in \mathcal{B}(\Omega_E)$, a  solution of \eqref{eq:expDPP1}
with a}\\ &\textup{bound on $\|u_\eps\|_{L^\infty(\Omega_E)}$ uniform in $\eps$.}
\end{split}
\end{equation}
\medskip
\par\noindent 
\textbf{Uniform Boundedness:}
\begin{equation}\label{as:stab2}\tag{UB}
\begin{split}
&\textup{For all $ \eps>0$ there exists $ u_\eps \in \mathcal{B}(\Omega_E)$, a  solution of \eqref{eq:expDPP1},  and }\\
&\inf_{\Gamma_1} G \leq u_\eps(x) \leq \sup_{\Gamma_1}{G} \quad \textup{for all}\quad x\in \Omega.
\end{split}
\end{equation}
\medskip
\par\noindent 
\textbf{Comparison Principle:}
\begin{equation}\label{as:comppr}\tag{CP}
\begin{split}
&\textup{Let $u^1_\eps$ and $u^2_\eps$ be a subsolution and a supersolution of \eqref{eq:expDPP1} }\\ &\textup{with boundary data $G_1$ and $G_2$ respectively.} \\ & \textup{If $G_1\leq G_2$ on $\Gamma_1$, then $u_\eps^1\leq u_\eps^2$ in $\Omega_E$.} 
\end{split}
\end{equation}
 
\begin{theorem}[Convergence of general mean approximations in the  Euclidean case, $1<p \le \infty$, \cite{dTMP20}]\label{thm1} Let 
$\Omega\subset \R^n$ be a bounded domain and $g\in C(\partial\Omega)$. Let the family of averages  $\{A_\eps\}_{\eps>0}$ satisfy the \textbf{AMVP} with respect to the $p$-Laplacian.  Let also $\{u_\eps\}_{\eps>0}$ be a sequence of solutions of the corresponding DPP \eqref{eq:expDPP1}, where $G$ is a continuous extension of $g$ to $\Gamma_1$. Then, we have that 
\[
 u_\eps\to u  \text{  uniformly  in   } \overline{\Omega}  \text{  as  } \eps\to0,  
\]
\begin{itemize}
\item when the domain $\Omega$ is of  class $C^{2}$  and the family of averages  $\{A_\eps\}_{\eps>0}$  satisfies  \eqref{as:stab},  or 
\item when  the domain $\Omega$ is  Lipschitz and the family of averages  $\{A_\eps\}_{\eps>0}$  satisfies  the  uniform boundedness \eqref{as:stab2} and the comparison principle
\eqref{as:comppr} properties, 
\end{itemize}
where $u$ is the unique  solution of the Dirichlet problem
\begin{equation} \label{dirichlet0}
\left\{
\begin{array}{cccl}
-\Delta_{p} u & = & 0& \text{ in } \Omega\\
u &=& g & \text{ on } \partial{\Omega},
\end{array}
\right.
\end{equation}
\end{theorem}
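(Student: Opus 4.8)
We follow the half-relaxed limits method (in the spirit of Barles--Souganidis), reducing the statement to three ingredients: an interior consistency step identifying the relaxed limits as viscosity sub/supersolutions of $-\Delta_p u=0$, a boundary step forcing these relaxed limits to attain the datum $g$, and the comparison principle for \eqref{dirichlet0} to squeeze them together. In either of the two settings the hypothesis \eqref{as:stab} (resp. \eqref{as:stab2}) supplies, for every $0<\eps\le 1$, a solution $u_\eps$ of \eqref{eq:expDPP1} with $M:=\sup_{0<\eps\le1}\|u_\eps\|_{L^\infty(\Omega_E)}<\infty$. Define the upper and lower relaxed limits
\[
\overline u(x)=\limsup_{\substack{\eps\to0\\ \Omega\ni y\to x}}u_\eps(y),\qquad \underline u(x)=\liminf_{\substack{\eps\to0\\ \Omega\ni y\to x}}u_\eps(y),\qquad x\in\overline\Omega,
\]
which are bounded, respectively upper and lower semicontinuous, and satisfy $\underline u\le\overline u$ on $\overline\Omega$.

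Next I would show that $\overline u$ is a viscosity subsolution and $\underline u$ a viscosity supersolution of $-\Delta_p u=0$ in $\Omega$. Let $\phi\in C^2$ touch $\overline u$ from above at an interior point $x_0$, strictly and with $\nabla\phi(x_0)\neq0$ (the degenerate case $\nabla\phi(x_0)=0$ being removed by the usual reduction, since for this operator it suffices to test against functions with non-vanishing gradient). After a harmless modification of $\phi$ away from $x_0$ so that $u_\eps\le\phi$ on all of $\Omega_E$ (possible because the $u_\eps$ are uniformly bounded, and using that the natural $p$-means localize, or a cutoff), the usual perturbed test-function argument produces $\eps_k\to0$ and near-maximum points $x_k\to x_0$ of $u_{\eps_k}-\phi$, with error $o(\eps_k^2)$ and $u_{\eps_k}(x_k)-\phi(x_k)\to0$. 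Writing $\eta_k:=\sup_{\Omega_E}(u_{\eps_k}-\phi)$, monotonicity and affine invariance of $A_{\eps_k}$ together with the \textbf{AMVP} (Definition \ref{def:MVP}) give
\[
u_{\eps_k}(x_k)=A_{\eps_k}[u_{\eps_k}](x_k)\le A_{\eps_k}[\phi](x_k)+\eta_k=\phi(x_k)+c\,\eps_k^{2}\,\Delta_p\phi(x_k)+o(\eps_k^{2})+\eta_k ,
\]
and comparing with $u_{\eps_k}(x_k)-\phi(x_k)\ge \eta_k-o(\eps_k^2)$, dividing by $c\,\eps_k^2$ and letting $k\to\infty$ yields $\Delta_p\phi(x_0)\ge0$, i.e. $-\Delta_p\phi(x_0)\le0$. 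The supersolution property of $\underline u$ is symmetric.

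The crux is the boundary step: $\overline u\le g$ and $\underline u\ge g$ on $\partial\Omega$. Fix $\xi_0\in\partial\Omega$ and $\delta>0$. In the $C^2$ case one uses the exterior ball at $\xi_0$ to build an explicit smooth barrier $w$ on a neighborhood $N$ of $\xi_0$ (of the form $a+b\,|x-y_0|^{-\gamma}$, with the obvious logarithmic/linear variants for $p\ge n$ and the midrange barrier for $p=\infty$), which is a \emph{strict} classical supersolution of $-\Delta_p w=0$ in $N\cap\Omega$, satisfies $w\ge M$ on $\partial N\cap\overline\Omega$, $w\ge G$ on $\Gamma_1\cap\overline N$, and $w(\xi_0)\le g(\xi_0)+\delta$. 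Because $w$ is a strict supersolution, the $o(\eps^2)$ error in the expansion of $A_\eps[w]$ is absorbed, so a direct maximum-principle argument for the DPP (using \eqref{as:stab} only for the bound $M$) shows $u_\eps\le w$ in $N\cap\Omega$ for all small $\eps$; hence $\overline u(\xi_0)\le w(\xi_0)\le g(\xi_0)+\delta$, and $\delta\downarrow0$ gives $\overline u(\xi_0)\le g(\xi_0)$. In the Lipschitz case the exterior ball is replaced by the exterior cone / corkscrew condition, which still supports a (less explicit) barrier of the same type for the $p$-Laplacian; here one invokes \eqref{as:stab2} for the base bound and the comparison principle \eqref{as:comppr} to pass from the barrier to $u_\eps$, after which the conclusion is the same. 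The symmetric construction yields $\underline u\ge g$ on $\partial\Omega$. I expect this boundary analysis to be the main obstacle: it is precisely where the geometry of $\partial\Omega$ and the extra structural hypotheses are consumed.

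Finally, $\overline u$ is an upper semicontinuous viscosity subsolution of $-\Delta_p u=0$ in $\Omega$ with $\overline u\le g$ on $\partial\Omega$, and $\underline u$ a lower semicontinuous viscosity supersolution with $\underline u\ge g$ on $\partial\Omega$; by the (classical, $1<p\le\infty$) comparison principle for the homogeneous $p$-Laplacian one gets $\overline u\le\underline u$ on $\overline\Omega$. Combined with $\underline u\le\overline u$, the function $u:=\overline u=\underline u$ is both upper and lower semicontinuous, hence $u\in C(\overline\Omega)$, and it is the unique viscosity solution of \eqref{dirichlet0}. Uniform convergence is then automatic from the equality of the two relaxed limits and compactness of $\overline\Omega$: if $u_\eps\not\to u$ uniformly there are $\delta>0$, $\eps_k\to0$ and $x_k\to x_*\in\overline\Omega$ with $|u_{\eps_k}(x_k)-u(x_k)|\ge\delta$, and passing to a subsequence and using continuity of $u$ at $x_*$ contradicts either $\limsup u_{\eps_k}(x_k)\le\overline u(x_*)$ or $\liminf u_{\eps_k}(x_k)\ge\underline u(x_*)$. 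This completes the proof.
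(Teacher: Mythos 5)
The paper does not actually prove Theorem \ref{thm1}; it quotes it from \cite{dTMP20} and only indicates the method: for $C^{2}$ domains an extension of the Barles--Souganidis scheme \cite{BS91} combined with the \emph{strong uniqueness theorem} for the $p$-Laplacian (Proposition 3.2 in \cite{dTMP20}), which handles the boundary condition in the generalized viscosity sense; for Lipschitz domains, uniform boundedness \eqref{as:stab2} and the DPP comparison principle \eqref{as:comppr} feed into a quantitative boundary iteration (Lemma 4.4 of \cite{dTMP20}, the Euclidean ancestor of Theorem \ref{TheoremBoundary} here). Your interior consistency step (half-relaxed limits, monotonicity plus affine invariance plus the AMVP, near-maximum points with $o(\eps^{2})$ error) is correct and is the same as in the cited proof. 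Your $C^{2}$ treatment differs in a reasonable way: you replace the strong uniqueness principle by explicit exterior-ball barriers of the form $a+b\,|x-y_{0}|^{-\gamma}$, which are smooth with nonvanishing gradient, hence strict DPP-supersolutions via the AMVP, and then conclude with the ordinary comparison principle for \eqref{dirichlet0}; this is a legitimate, arguably more elementary route (note only that it implicitly uses that $A_\eps$ averages over $B_\eps(x)$, i.e.\ locality, which holds for the means of interest but is not part of Definition \ref{def:M}).

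The genuine gap is the Lipschitz case. You write that the exterior cone/corkscrew condition ``still supports a (less explicit) barrier of the same type'' and that one passes from the barrier to $u_\eps$ via \eqref{as:comppr}. Neither half survives scrutiny as stated. A cone barrier for the $p$-Laplacian (a capacitary potential or a homogeneous solution $r^{\kappa}f(\theta)$) is not explicit, and more importantly you have no control guaranteeing it is smooth with uniformly nonvanishing gradient up to the boundary point with derivative bounds; but the AMVP only applies to such functions, and without it the barrier is a supersolution of the PDE, not of the DPP, so \eqref{as:comppr} --- which compares sub/supersolutions of \eqref{eq:expDPP1}, not of the equation --- cannot be invoked to deduce $u_\eps\le w$. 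This conversion is exactly the missing step, and it is why \cite{dTMP20} (and the present paper in Section 5) do not use a single barrier at all: they run a Wiener-type iteration over dyadic annuli centered at exterior corkscrew balls, using radial fundamental-solution-type $p$-harmonic functions (smooth, with nonvanishing gradient in each annulus), the convergence result for $p$-harmonic boundary data (the analogue of Proposition \ref{Prop:convergence}), and the DPP comparison principle, improving the oscillation of $u_\eps$ near the boundary point by a fixed factor $\theta\in(0,1)$ at each scale. Unless you supply such an iteration, or a barrier that is provably an admissible test function for the AMVP with uniform error, the boundary step $\overline{u}\le g\le\underline{u}$ on $\partial\Omega$ in the Lipschitz case --- which you yourself identify as the crux --- is not established, and the final comparison argument has nothing to act on.
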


The proof  of this theorem for $C^2$ domains is based on an extension of the method in  \cite{BS91},  once we have the \textit{strong uniqueness theorem} for the $p$-Laplacian (Proposition 3.2 in \cite{dTMP20}). The key  observation  is that the convergence of approximations that satisfy the asymptotic mean value  property, the
uniform boundedness   property  \eqref{as:stab2}, and the comparison principle  \eqref{as:comppr} depends only on the strong uniqueness principle for the limit operator, which is the $p$-Laplacian in this case.  \par
To apply the Barles-Souganidis method in smooth domains of the Heisenberg group $\mathbb{H}$, we need to establish the \textit{strong uniqueness principle} for the homogeneous $p$-Laplacian in the Heisenberg group
$\Delta^N_{\mathbb{H},p}$. In \cite{MS21} we were able to establish it only when the domain $\Omega=B$,  the Euclidean unit ball in $\mathbb{R}^3$. Thus, we concluded the convergence of general mean approximations only in this case. 
\par
For more general domains in $\mathbb{H}$ an obstruction to the application of this method is the presence of characteristic points. 
In the current paper, we pursue a different approach that does not rely on the strong uniqueness principle.
 First, we use  good properties of the fundamental solution of the $p$-Laplacian to establish convergence in  smooth ring domains with $p$-harmonic boundary data. Second,   we set up a boundary iteration suggested by the proof of sufficiency for the Wiener condition of boundary regularity. These steps are independent of each other. We will establish them for the case of the natural $p$-means in the Heisenberg group. \par

The notion of natural $p$-means in general topological measure spaces was introduced in \cite{IMW17}.
Let $X$ be a compact topological measure space endowed with a positive Radon measure $\nu$.  Given a function $u\in L^p(X)$ and $1< p\le \infty$, there exists a unique real value $\mu_p^X(u)$ such that
\begin{equation}\label{naturalpmean0}
\|u-\mu_p^X(u)\|_p=\min_{\lambda\in \R}\| u-\lambda\|_p   \,.
\end{equation}
We will call $\mu_p^X(u)$ the natural $p$-mean of $u$ in $X$. Note that the above definition extends to the case $p=1$, provided that $u$ is assumed to be continuous on $X$.
Existence, uniqueness, and several useful properties of the natural $p$-means were studied in \cite{IMW17}, where the AMVP for the natural $p$-means is established in the Euclidean case. \par

While for general $p$ there is no explicit formula for $\mu_p^X(u)$,  for the cases $p=1,2$, and $p=\infty$  we have:
\begin{equation*}
\begin{array}{rcl}\medskip
\mu_1(u)& =& \text{med}(u), \\ \medskip
\mu_2(u)& =& \dashint_X u(y) d\nu, \text{  and} \\ 
\mu_{\infty}(u)& =& \frac12 (\essinf_{y\in X} u(y)+\esssup_{y\in X} u(y)).
\end{array}
\end{equation*}
Consider  the family  of natural $p$-mean operators  $\{\mu_p( u, \eps)\}_{0<\eps<1}$ defined on functions
$u\in {\mathcal B}(\Omega_{E})$ as follows. For  $x\in \overline{\Omega}\subset\mathbb{R}^n$ and $B_{\eps}(x)$ the Euclidean ball of radius $\eps$ centered at $x$, we set 
\begin{equation} 
\mu_p(u, \eps)(x)= \mu^{\overline{B_{\eps}(x)}}_{p}(u).
\end{equation}
Observe that for any $u\in L^p(\Omega_E)$
 the function
$x\mapsto\mu_p(u, \eps)(x)$ is continuous  in $\overline\Omega$. This property 
is not shared by the tug-of-war means of type \eqref{dpp1}.\par

We can now rephrase Theorem 3.2 in \cite{IMW17}:
\begin{theorem}\label{aysmptotic10} The family  of natural p-mean operators  $\{\mu_p(\cdot, \eps)\}_{0<\eps<1}$  satisfies the asymptotic mean value property relative to  the $p$-Laplacian; that is,  for every $\phi\in C^\infty(\Omega_E)$ such that $\nabla \phi \not=0$, we have
\begin{equation}\label{eq:MVP20}
\mu_p(\phi, \eps)(x)= \phi(x) +\frac{\eps^2}{2(n+p)} \Delta_p^H \phi(x)+ o(\eps^2) , \; \text{as} \; \eps \to 0 ,
\end{equation}
where the constant in $o(\eps^2)$ can be taken uniformly for all $x\in \overline{\Omega}$.
\end{theorem}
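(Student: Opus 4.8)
The plan is to reduce the claimed expansion to a one-dimensional optimization problem and then Taylor expand. Fix $x$ and write $\phi_\eps(y) = \phi(x + \eps y)$ for $y \in \overline{B_1}$, so that, by the affine-invariance and scaling structure of the natural $p$-mean, $\mu_p(\phi, \eps)(x) = \mu_p^{\overline{B_1}}(\phi_\eps)$. By definition, the value $\lambda_\eps := \mu_p^{\overline{B_1}}(\phi_\eps)$ is the unique minimizer of $F_\eps(\lambda) = \int_{\overline{B_1}} |\phi_\eps(y) - \lambda|^p \, dy$, characterized (for $1 < p < \infty$) by the Euler--Lagrange condition
\begin{equation}\label{eq:EL}
\int_{\overline{B_1}} |\phi_\eps(y) - \lambda_\eps|^{p-2}\,(\phi_\eps(y) - \lambda_\eps)\, dy = 0 .
\end{equation}
First I would record that $\lambda_\eps \to \phi(x)$ as $\eps \to 0$ (immediate from stability, since $\mathrm{osc}_{B_\eps(x)} \phi \to 0$), and then seek the second-order correction by positing $\lambda_\eps = \phi(x) + \eps^2 c\, \Delta_p^H\phi(x) + o(\eps^2)$ and verifying it solves \eqref{eq:EL} to the appropriate order.

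The core computation is to expand the integrand in \eqref{eq:EL}. Writing $\phi_\eps(y) - \phi(x) = \eps \langle \nabla\phi(x), y\rangle + \frac{\eps^2}{2}\langle D^2\phi(x) y, y\rangle + o(\eps^2)$ and $\phi_\eps(y) - \lambda_\eps = (\phi_\eps(y) - \phi(x)) - (\lambda_\eps - \phi(x))$, I would substitute into \eqref{eq:EL}, expand $|t|^{p-2}t$ around the leading term $\eps\langle\nabla\phi(x),y\rangle$ (here $\nabla\phi(x)\neq 0$ is exactly what makes this function smooth in $t$ away from where the leading term vanishes — on the lower-dimensional set $\langle\nabla\phi(x),y\rangle = 0$ one must argue the contribution is negligible, which is where some care with the singularity of $|t|^{p-2}t$ for $p<2$ enters). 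Collecting terms: the $O(\eps^{p-1})$ term vanishes by antisymmetry of $y \mapsto |\langle\nabla\phi,y\rangle|^{p-2}\langle\nabla\phi,y\rangle$ over $\overline{B_1}$; the $O(\eps^p)$ term, which is linear in the unknown combination $\lambda_\eps - \phi(x)$ plus a term involving $D^2\phi(x)$, must itself vanish, and solving that linear relation produces precisely the coefficient $\frac{1}{2(n+p)}$ multiplying $\Delta_p^H\phi(x) = |\nabla\phi|^{p-2}\big(\Delta\phi + (p-2)\Delta_\infty^N\phi\big)$. The relevant Gaussian-type integrals over $\overline{B_1}$ — namely $\int |\langle e,y\rangle|^{p-2} y_i y_j\, dy$ and $\int |\langle e,y\rangle|^{p-2}\, dy$ for a unit vector $e$ — are rotationally reducible and give the ratio $n+p$ in the denominator; I would not grind through these but cite \cite{IMW17} for their evaluation.

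Since this is essentially a restatement of Theorem 3.2 of \cite{IMW17}, the cleanest route is to invoke that result directly in its Euclidean form and then observe that the expansion is stated pointwise with a remainder depending only on bounds for the derivatives of $\phi$ up to a fixed order and a positive lower bound for $|\nabla\phi|$ on $\overline{\Omega_E}$ — both of which are uniform over $x \in \overline{\Omega}$ because $\phi \in C^\infty(\Omega_E)$ with $\nabla\phi \neq 0$ on the compact set $\overline{\Omega_E}$. I expect the main obstacle to be the uniformity claim and the handling of the set where the leading linear term $\langle\nabla\phi(x),y\rangle$ degenerates: one needs that the implied constants in the Taylor expansion of \eqref{eq:EL} depend on $\phi$ only through $\|\phi\|_{C^3(\overline{\Omega_E})}$ and $\min_{\overline{\Omega_E}}|\nabla\phi|$, so that a single $o(\eps^2)$ works for every $x$. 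This follows from a compactness/continuity argument once the pointwise expansion is in hand, and it is already contained in the proof of Theorem 3.2 in \cite{IMW17}, so for this paper it suffices to quote it.
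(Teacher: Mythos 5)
Your proposal is correct and takes essentially the same route as the paper: the paper offers no independent proof of this statement, presenting it only as a rephrasing of Theorem 3.2 in \cite{IMW17}, which is exactly the result you ultimately invoke (together with the observation that the remainder is controlled uniformly by derivative bounds and $\min|\nabla\phi|$ on the compact set $\overline{\Omega_E}$). Your computational sketch via the integral characterization, the symmetry cancellation of the first-order term, and the second-order balance yielding the coefficient $\tfrac{1}{2(n+p)}$ also mirrors the strategy the paper itself uses later for the Heisenberg analogue in Lemma \ref{amvph}.
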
 
The  uniform stability \eqref{as:stab} property, the  uniform boundedness \eqref{as:stab2} property,   and the comparison principle \eqref{as:comppr} of the natural $p$-means in the Euclidean space were established in \cite{MS21}.
Thus, Theorems \ref{thm1} and \ref{aysmptotic10} can be combined to get the following theorem.
\begin{theorem}[Convergence of natural $p$-mean approximations in the Euclidean case, \cite{MS21}]\label{thm:main20}
Assume $p\in(1,\infty]$, $\Omega\subset \R^n$ is a bounded Lipschitz domain and $g\in C(\partial\Omega)$. For $
0 < \eps \leq 1$ and $A_{\eps} =  \mu_{p}( \cdot, \eps) $, let $u_{\eps}$ be the solution to the DPP  \eqref{eq:expDPP1}, where $G$ is a continuous extension of $g$ to $\Gamma_1$. Then,  we 
  have  
 \[
 u_\eps\to u   \text{  uniformly  in   } \overline{\Omega}  \text{  as  } \eps\to0,  
\]
where $u$ is the unique  solution of the Dirichlet problem \eqref{dirichlet0}.
\end{theorem}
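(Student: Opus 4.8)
The strategy is to verify that the family $\{\mu_p(\cdot,\eps)\}_{0<\eps\le1}$ meets all the hypotheses of the second alternative in Theorem~\ref{thm1} (the Lipschitz-domain case) and then quote that theorem directly. Three things have to be checked: that each $\mu_p(\cdot,\eps)$ is an average operator in the sense of Definition~\ref{def:M}; that the family satisfies the \textbf{AMVP} of Definition~\ref{def:MVP}; and that it satisfies the uniform boundedness \eqref{as:stab2} and comparison \eqref{as:comppr} properties.

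For the first item I would rely on the basic theory of natural $p$-means from \cite{IMW17}. Stability and affine invariance are immediate from the minimization characterization \eqref{naturalpmean0}: the best constant approximant of $u$ on $\overline{B_\eps(x)}$ lies between $\inf$ and $\sup$ of $u$ over that ball, and replacing $u$ by $\lambda u+\xi$ with $\lambda>0$ transforms the minimization problem in the obvious way. Monotonicity is the comparison property of natural $p$-means proved in \cite{IMW17}. One also records from \cite{IMW17} that $x\mapsto\mu_p(u,\eps)(x)$ is continuous on $\overline\Omega$, so $\mu_p(\cdot,\eps)$ indeed maps $\mathcal{B}(\Omega_E)$ into $\mathcal{B}(\overline\Omega)$.

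For the second item, the \textbf{AMVP} is exactly Theorem~\ref{aysmptotic10}: for $\phi\in C^\infty(\Omega_E)$ with $\nabla\phi\ne0$ one has
\[
\mu_p(\phi,\eps)(x)=\phi(x)+\frac{\eps^2}{2(n+p)}\,\Delta_p^H\phi(x)+o(\eps^2)
\]
uniformly in $x\in\overline\Omega$, which is the expansion of Definition~\ref{def:MVP} with the $\eps$- and $\phi$-independent constant $c=\frac1{2(n+p)}>0$. For the third item I would invoke \cite{MS21}: there, using the monotonicity and affine invariance just recorded together with a Perron-type construction, the solvability of the DPP \eqref{eq:expDPP1} for each $0<\eps\le1$, the bound $\inf_{\Gamma_1}G\le u_\eps\le\sup_{\Gamma_1}G$ in $\Omega$ (that is, \eqref{as:stab2}), and the comparison principle \eqref{as:comppr} are all established.

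With these three items in hand, the second bullet of Theorem~\ref{thm1} applies verbatim: $\Omega$ is a bounded Lipschitz domain, $g\in C(\partial\Omega)$, and $A_\eps=\mu_p(\cdot,\eps)$ satisfies the \textbf{AMVP}, \eqref{as:stab2}, and \eqref{as:comppr}; hence $u_\eps\to u$ uniformly on $\overline\Omega$ as $\eps\to0$, with $u$ the unique viscosity solution of \eqref{dirichlet0}. The only step that is more than a one-line citation is the monotonicity of $\mu_p$ and the passage from it to \eqref{as:stab2} and \eqref{as:comppr}, and precisely these are supplied by \cite{IMW17} and \cite{MS21}; the rest of the argument is the assembly of Theorems~\ref{thm1} and \ref{aysmptotic10}.
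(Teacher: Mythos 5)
Your proposal is correct and follows essentially the same route as the paper: the paper states that the uniform boundedness \eqref{as:stab2} and comparison \eqref{as:comppr} properties of the natural $p$-means were established in \cite{MS21}, and then obtains Theorem \ref{thm:main20} by combining Theorem \ref{thm1} (Lipschitz case) with the AMVP of Theorem \ref{aysmptotic10}, exactly as you do. Your additional verification that $\mu_p(\cdot,\eps)$ is an average operator in the sense of Definition \ref{def:M} is a harmless elaboration of the same assembly.
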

In a recent paper,  Chandra, Ishiwata, Magnanini, and Wadade \cite{CIMW21} have also proved the convergence of the natural $p$-means in the Euclidean case. Their approach and our approach differ in the treatment at  the boundary, but the main results are essentially the same in the case of $\mathbb{R}^n$.

Recall that for the first Heisenberg group,  $\mathbb{H}=(\mathbb{R}^3, *)$, the group operation is given as
$$(x_1,x_2,x_3)*(y_1,y_2,y_3)=\left(x_1+y_1, x_2+y_2, x_3+y_3+\frac{1}{2}(x_1y_2-x_2y_1)\right).$$
The vector fields
$$X_1=\partial_{x_1}-\frac{x_2}{2}\partial_{x_3},\quad X_2=\partial_{x_2}+\frac{x_1}{2}\partial_{x_3}\quad \text{and}\quad
T=\partial_{x_3}$$
form a basis of the associated Lie algebra. We denote the horizontal gradient of a smooth function $u$ by $\nabla_{\mathbb{H}}{u}=(X_1u)X_1+(X_2u)X_2$, 
 the horizontal Laplacian by
$$\Delta_{\mathbb{H}} u=X_1^2u+X_2^2u,$$
the horizontal $ p$-Laplacian by
$$\Delta_{\mathbb{H},p} u = X_1(|\nabla_{\mathbb{H}}{u}|^{p-2}X_1u)+X_2(|\nabla_{\mathbb{H}}{u}|^{p-2}X_2u),$$
and the  normalized horizontal $\infty$-Laplacian by
$$\Delta_{\mathbb{H},\infty} u =\langle  D^{2,*}_{\mathbb{H}} u \, \frac{\nabla_{\mathbb{H}}{u}}{|\nabla_{\mathbb{H}}{u}|}\, , \,  \frac{\nabla_{\mathbb{H}}{u}}{|\nabla_{\mathbb{H}}{u}|}\rangle,$$
where $D^{2,*}_{\mathbb{H}}u$ denotes the symmetrized Hessian $(D^{2,*}_{\mathbb{H}}u)_{ij}=(X_iX_j+X_jX_i)/2$.
For a smooth function $u$, with non-vanishing horizontal gradient, we define the normalized  $p$-Laplacian as
$$ \Delta^N_{\mathbb{H},p} u = |\nabla_{\mathbb{H}}{u}|^{ 2-p} \Delta_{\mathbb{H},p} u = ( p-2)\Delta_{\mathbb{H},\infty} u +\Delta_{\mathbb{H}} u \, .$$
The Kor\'{a}nyi  smooth gauge, given by
$$
|x|_{\mathbb{H}}=|(x_1,x_2,x_3)|_{\mathbb{H}}=\left((x_1^2+x_2^2)^2+16x_3^2\right)^{\frac{1}{4}},$$
induces the left-invariant metric
$$d_{\mathbb{H}}(x,y)=|x^{-1}*y|_{\mathbb{H}}.$$
The Kor\'{a}nyi ball with center $x_0$ and radius $r$ will be denoted by $$B^{\mathbb{H}}_r(x_0)=\{ x\in\mathbb{H}\, |\, d_{\mathbb{H}}(x_0,x)<r \}.$$
The Heisenberg group $\mathbb{H}$ is unimodular and its Haar measure is the Lebesgue measure in $\mathbb{R}^3$.
We also have a family of anisotropic dilations $(\rho_{\lambda})_{\lambda>0}$, that are group isomorphisms
\begin{equation}\label{dilations}
\rho_\lambda(x)=\rho_{\lambda}(x_1,x_2,x_3)=(\lambda x_1,\lambda x_2,\lambda^2 x_3).
\end{equation}
The  homogeneous dimension of $\mathbb{H}$ is $Q=4$.\par

The natural $p$-means in the Heisenberg group $\mathbb{H}$ were studied in   \cite{MS21}.
Suppose that $\Omega\subset\mathbb{H}$ is a bounded domain. Fix $0\leq \eps<1$ and consider the $\eps$-boundary strip given by
$$\Gamma_{\eps}^{\mathbb{H}} = \{x\in \mathbb{H}\setminus \Omega\,\colon\, \text{dist}_{\mathbb{H}}(x, \Omega) \leq \eps\}.$$
 Define $\Omega_{E}= \Omega\, \cup\,  \Gamma^{\mathbb{H}}_{1}$ and denote by $\mathcal{B}(\Omega_E)$ and  $\mathcal{B}(\overline{\Omega})$ the set of real-valued bounded Lebesgue measurable functions on $\Omega_E$ and $\overline{\Omega}$, respectively.
We define the natural $p$-mean average operator in the Heisenberg group,
$$\mu^{\mathbb{H}}_p(\,\cdot\,, \eps):\mathcal{B}(\Omega_E)\longrightarrow \mathcal{B}(\overline{\Omega}) ,$$ 
given by
$$\mu^{\mathbb{H}}_p(\phi, \eps)(y)= \mu^{\overline{B^{\mathbb{H}}_\eps(y)}}_{p}(\phi)\quad \text{for all}\,\,\phi\in \mathcal{B}(\Omega_E)\,\,\text{and}\,\, y\in\overline\Omega,$$
where $\mu^{\overline{B^{\mathbb{H}}_\eps(y)}}_{p}(\phi)$ is defined as in \eqref{naturalpmean0}, using as Radon measure $\nu$ the Lebesgue measure on $X=\overline{B^{\mathbb{H}}_\eps(y)}$.
\par

Let us now describe the DPP associated to the natural $p$-means in $\mathbb{H}$.
Let $G:\Gamma^{\mathbb{H}}_{\eps}\to \mathbb{R}$ be a continuous function and consider the solutions of the following boundary value problem
\begin{equation}\label{eq:expH^10}
\left\{
\begin{array}{cccl}
u_\eps(x) & = & \mu^{\mathbb{H}}_{p}(u_\eps, \eps) (x) &  \text{  if  }  x\in{\Omega},\\
u_\eps(x)&=& G(x) & \text{ if } {x\in\Gamma^{\mathbb{H}}_{\eps}}.
\end{array}
\right.
\end{equation}
The stability and monotonicity of the natural $p$-means in $\mathbb{H}$ was established in \cite{MS21}, as well as the following AMVP with respect to the $p$-Laplacian
  
 \begin{lemma}[\cite{MS21}] \label{amvph}
 For $1<p<\infty$ define the constant
	\begin{equation*}\label{c_p}  
	c_p= \frac {2}{(p+2)(p+4)}\Big(\frac{\Gamma(\frac{p}4+\frac32)}{   \Gamma(\frac{p}4+1)}\Big)^2,
	\end{equation*}	where $\Gamma$ is the Euler Gamma function.
	Let  $u$ be a smooth function in $\Omega_E$ with $\nabla_{\mathbb{H}}u\neq 0$ in $\Omega_E$. Then, we have the expansion 
	\begin{equation*}
	\mu^{\mathbb{H}}_p(u,\eps)(x_0)=u(x_0)+c_p\, \eps^2\Delta^N_{\mathbb{H},p} u(x_0)+o(\eps^2) \quad\text{as}\quad \eps\to 0
	\end{equation*}
	for all $x_0\in\overline{\Omega}$.   
\end{lemma}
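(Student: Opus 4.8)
The plan is to establish the expansion
$$\mu^{\mathbb{H}}_p(u,\eps)(x_0)=u(x_0)+c_p\,\eps^2\Delta^N_{\mathbb{H},p}u(x_0)+o(\eps^2)$$
by exploiting the variational characterization \eqref{naturalpmean0}: the value $\lambda=\mu^{\mathbb{H}}_p(u,\eps)(x_0)$ is the unique minimizer of $\lambda\mapsto\int_{B^{\mathbb{H}}_\eps(x_0)}|u(y)-\lambda|^p\,dy$, hence it is characterized by the Euler--Lagrange equation
$$\int_{B^{\mathbb{H}}_\eps(x_0)}|u(y)-\lambda|^{p-2}(u(y)-\lambda)\,dy=0$$
(with the obvious modification for $p=\infty$, where $\lambda$ is the midrange). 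The first step is to Taylor-expand $u$ around $x_0$ in horizontal coordinates: writing the group translation by $x_0$ and dilation by $\eps$, a point of $B^{\mathbb{H}}_\eps(x_0)$ is $x_0*\rho_\eps(z)$ with $z\in B^{\mathbb{H}}_1(0)$, and
$$u(x_0*\rho_\eps(z))=u(x_0)+\eps\langle\nabla_{\mathbb{H}}u(x_0),z'\rangle+\frac{\eps^2}{2}\langle D^{2,*}_{\mathbb{H}}u(x_0)z',z'\rangle+\eps^2\,(X_3u)(x_0)z_3+o(\eps^2),$$
where $z'=(z_1,z_2)$; the point is that the $z_3$-direction (the $T$-direction) enters only at order $\eps^2$ through a linear term which integrates to zero over the symmetric Korányi ball, and what survives is governed entirely by the horizontal data.

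The second step is to substitute this expansion into the Euler--Lagrange equation. Set $\lambda=u(x_0)+\eps^2 A+o(\eps^2)$ for an unknown constant $A$; then $u(y)-\lambda=\eps\langle\nabla_{\mathbb{H}}u(x_0),z'\rangle+\eps^2\big(\tfrac12\langle D^{2,*}_{\mathbb{H}}u(x_0)z',z'\rangle+(X_3u)(x_0)z_3-A\big)+o(\eps^2)$. Since $\nabla_{\mathbb{H}}u(x_0)\neq0$, the leading term is of order $\eps$ and one factors $|u(y)-\lambda|^{p-2}(u(y)-\lambda)=\eps^{p-1}|\langle\nabla_{\mathbb{H}}u(x_0),z'\rangle|^{p-2}\langle\nabla_{\mathbb{H}}u(x_0),z'\rangle\big(1+(p-1)\eps\cdot(\text{second-order terms})/\langle\nabla_{\mathbb{H}}u(x_0),z'\rangle+o(\eps)\big)$. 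Integrating, the order-$\eps^{p-1}$ term vanishes by oddness in $z'$; the order-$\eps^p$ term must then vanish, which yields a linear equation for $A$ of the form
$$c\cdot A=(p-2)\cdot(\text{coefficient})\cdot\Delta_{\mathbb{H},\infty}u(x_0)+(\text{coefficient})\cdot\Delta_{\mathbb{H}}u(x_0),$$
the $(X_3u)(x_0)z_3$ contribution again dropping out by symmetry of the Korányi ball under $z_3\mapsto-z_3$. Solving gives $A=c_p\,\Delta^N_{\mathbb{H},p}u(x_0)$, provided the constant $c_p$ is computed correctly.

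The third step is the explicit evaluation of the integrals $\int_{B^{\mathbb{H}}_1(0)}|\langle e,z'\rangle|^{p-2}z_i z_j\,dz$ and $\int_{B^{\mathbb{H}}_1(0)}|\langle e,z'\rangle|^p\,dz$ over the unit Korányi ball $(|z'|^4+16z_3^2)^{1/4}<1$, i.e.\ $|z'|^4+16z_3^2<1$. Here one integrates out $z_3$ first (the $z_3$-slab has half-length $\tfrac14\sqrt{1-|z'|^4}$), reducing to a two-dimensional integral over $\{|z'|<1\}$ weighted by $\sqrt{1-|z'|^4}$; passing to polar coordinates $z'=r\omega$ and using rotational averaging of $|\langle e,\omega\rangle|^{p-2}\omega_i\omega_j$ and $|\langle e,\omega\rangle|^p$ over the circle turns everything into Beta-function integrals in $r$. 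The ratio of these produces exactly $c_p=\tfrac{2}{(p+2)(p+4)}\big(\Gamma(\tfrac p4+\tfrac32)/\Gamma(\tfrac p4+1)\big)^2$. The main obstacle is ensuring the error terms are genuinely $o(\eps^2)$ \emph{uniformly} in $x_0\in\overline{\Omega}$: this requires uniform bounds on the $C^3$ norm of $u$ and a uniform lower bound on $|\nabla_{\mathbb{H}}u|$ on $\overline{\Omega}$ (so the nonlinearity $t\mapsto|t|^{p-2}t$ is expanded around a point bounded away from its singularity when $1<p<2$), together with an implicit-function-type argument showing that the minimizer $\lambda$ indeed admits an expansion $u(x_0)+\eps^2A+o(\eps^2)$ rather than something worse — this follows from strict convexity of the functional and a quantitative stability estimate for its minimizer, but must be handled with care for $p$ near $1$ and for $p=\infty$, where the $\essinf/\esssup$ formula replaces the Euler--Lagrange equation and the expansion of $\sup_{B^{\mathbb{H}}_\eps(x_0)}u$ and $\inf_{B^{\mathbb{H}}_\eps(x_0)}u$ must be tracked along the directions $\pm\nabla_{\mathbb{H}}u(x_0)/|\nabla_{\mathbb{H}}u(x_0)|$.
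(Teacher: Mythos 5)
Your overall skeleton --- rescale by $x_0*\rho_\eps(\cdot)$, Taylor-expand $u$, plug into the integral (Euler--Lagrange) characterization of the natural $p$-mean, and identify $c_p$ from integrals over the unit Kor\'anyi ball --- is the same as the paper's, but two steps in the middle are genuine gaps, and they are precisely where the paper has to work. First, the factorization $|u(y)-\lambda|^{p-2}(u(y)-\lambda)=\eps^{p-1}\,|\langle\nabla_{\mathbb{H}}u(x_0),z'\rangle|^{p-2}\langle\nabla_{\mathbb{H}}u(x_0),z'\rangle\,\bigl(1+(p-1)\eps\,(\text{second-order terms})/\langle\nabla_{\mathbb{H}}u(x_0),z'\rangle+o(\eps)\bigr)$ is not valid uniformly on the ball: the quantity you divide by vanishes on the whole slice $\{z'\perp\nabla_{\mathbb{H}}u(x_0)\}$ of $B^{\mathbb{H}}_1(0)$, no matter how large $\min_{\overline{\Omega}}|\nabla_{\mathbb{H}}u|$ is, so your proposed fix (a uniform lower bound on the horizontal gradient keeping the nonlinearity ``bounded away from its singularity'') does not work --- the singularity of $t\mapsto|t|^{p-2}t$ sits at $u(y)-\lambda=0$, which is hit inside every ball near that slice. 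Near it the relative error is unbounded and the claim that the integrated error is $o(\eps^{p})$ is unproven. The paper never expands the nonlinearity with a pointwise multiplicative error: it writes the exact first-order Taylor formula for $h(s)=|s|^{p-2}s$ with integral remainder $\int_0^1 h'(F(z,x_0,\eps,t))\,dt$, extracts from the identity an \emph{exact} expression for the rescaled correction $\delta_\eps(x_0)$, and passes to the limit by dominated convergence; this is also where the restriction $p>2$ of the paper's self-contained argument enters (the general case $1<p<\infty$ being quoted from \cite{MS21}, \cite{IMW17}, \cite{AKPW20}, with an alternative route for $1<p<2$ via the argument of Proposition \ref{keylemma0}).

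Second, the ansatz $\lambda=u(x_0)+\eps^2A+o(\eps^2)$ presupposes part of the conclusion. You acknowledge that an implicit-function/stability argument is needed to rule out worse behavior of the minimizer, but you do not supply it, and for a complete proof this is the other essential missing piece. The paper avoids the ansatz altogether: it first proves that the first-order quotient $(\mu^{\mathbb{H}}_p(q_{\eps,x_0},1)(0)-q_{\eps,x_0}(0))/\eps$ tends to $0$, using the $L^p$-continuity of the mean (so that uniform convergence $v_{\eps,x_0}\to v_{x_0}$ transfers to the means) together with the symmetry identity $\mu^{\mathbb{H}}_p(v_{x_0},1)(0)=0$; it then \emph{defines} $\delta_\eps(x_0)$ as the second-order quotient, solves for it explicitly from the integral identity, obtains a bound uniform in $\eps$ and $x_0$, and only then passes to the limit. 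Note also that the reduction from $u$ to its quadratic Heisenberg--Taylor polynomial must be justified through monotonicity and affine invariance of the mean (to absorb the $O(\eps^3)$ remainder), a small step you skip. Your final Beta-function evaluation of $c_p$ is fine in principle (the paper itself delegates that computation to \cite{IMW17} and \cite{AKPW20}), but without repairing the two gaps above the expansion itself is not established; also, the $p=\infty$ discussion is extraneous here, since the statement concerns $1<p<\infty$.
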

When $p=\infty$, the lemma also holds with $c_\infty=\lim_{p\to\infty} c_p= 1/2$.
We remark that this lemma  has been obtained  independently and in the case of general Carnot groups in \cite{AKPW20}.  
\par
To show convergence, we first study the case
when the boundary data is itself the restriction of a $p$-harmonic function with non-vanishing gradient. For tug-of-war means 
$$T(u, \eps)(x)=\frac{\alpha}2 \left(\sup_{B_{\eps}(x)} u_\eps+   \inf_{B_{\eps}(x)} u_\eps \right)+\beta\, \dashint_{B_{\eps}(x)}u_\eps(y)\, dy \, ,$$
this was first done using probability in \cite{MPR12}   in $\mathbb{R}^n$.
The proofs of our results in this paper are analytic and do not rely on probabilistic techniques.\par\medskip  
To go from continuous to semi-discrete, we  build sub - and super-solutions of the DPP  \eqref{eq:expH^10} from solutions to the continuous problem by using the following perturbation lemma.
\begin{lemma}[Perturbations for natural $p$-means,  $p>2$]\label{perturbation0}
	Let $\Omega'$ be an open set containing $\Omega$, and
	let $U$ be a function such that $\Delta^N_{\mathbb{H},p} U=0$ and $\nabla_{\mathbb{H}} U\neq 0$ in 
	$\Omega'$. Then, there exist $\hat{\eps}>0$, $s\geq 4$ and $q_0\in\mathbb{H}$ such that, denoting
	 $v(x)=|q_0^{-1}*x|^s_{\mathbb{H}} \,$, we have 
	
$$U+\eps v \textrm{ is a subsolution of } \eqref{eq:expH^10}$$ and  $$U-\eps v  \textrm{ is a supersolution of }\eqref{eq:expH^10}$$ in $\Omega_\eps$ for all $0<\eps<\hat{\eps}$.
\end{lemma}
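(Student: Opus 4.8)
The plan is to exploit the asymptotic mean value property of Lemma \ref{amvph} quantitatively, using the corrector $v(x)=|q_0^{-1}*x|^s_{\mathbb{H}}$ to dominate the $o(\eps^2)$ error term. First I would compute $\Delta^N_{\mathbb{H},p} v$ on $\Omega'$ for the shifted gauge power: since $U$ has non-vanishing horizontal gradient on the open set $\Omega'\supset\Omega$, by continuity there are constants $0<m\le M<\infty$ with $m\le|\nabla_{\mathbb{H}}U|\le M$ on a neighborhood of $\overline{\Omega}$. By picking the base point $q_0$ far enough from $\overline\Omega$ (or on the $x_3$-axis, away from the characteristic locus) and the exponent $s\geq 4$ large enough, one arranges that $v$ is smooth with $\nabla_{\mathbb{H}}v\neq 0$ on $\overline\Omega$ and, crucially, that $\Delta^N_{\mathbb{H},p} v\ge \kappa>0$ on $\overline\Omega$ for some $\kappa$ depending only on the geometry; this is the standard computation showing high gauge powers are $p$-subharmonic in the Heisenberg group away from the pole. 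Simultaneously $v$ and all its derivatives up to second order are bounded above on $\overline\Omega$.

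Next I would apply Lemma \ref{amvph} to the function $w_\eps=U+\eps v$ at an arbitrary $x_0\in\Omega_\eps$. Because $\nabla_{\mathbb{H}}w_\eps=\nabla_{\mathbb{H}}U+\eps\nabla_{\mathbb{H}}v$ stays uniformly away from $0$ for $\eps$ small (using the lower bound $m$ on $|\nabla_{\mathbb{H}}U|$), the expansion applies with a remainder $o(\eps^2)$ that is uniform in $x_0$ — here one must check that the family $\{w_\eps\}$ has $C^2$ norms and gradient lower bounds uniform in $\eps$, so that the single $o(\eps^2)$ from the lemma genuinely controls the whole family. Then
\[
\mu^{\mathbb{H}}_p(w_\eps,\eps)(x_0)-w_\eps(x_0)=c_p\,\eps^2\,\Delta^N_{\mathbb{H},p} w_\eps(x_0)+o(\eps^2).
\]
Expanding $\Delta^N_{\mathbb{H},p}(U+\eps v)$: the operator is not linear, but it is a continuous function of $(\nabla_{\mathbb{H}}\cdot, D^{2,*}_{\mathbb{H}}\cdot)$, so $\Delta^N_{\mathbb{H},p}(U+\eps v)=\Delta^N_{\mathbb{H},p} U+O(\eps)=O(\eps)$ uniformly — wait, that only gives $O(\eps)$, not the needed positive $\eps$. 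The correct accounting is to write $\Delta^N_{\mathbb{H},p}(U+\eps v)(x_0)\ge \eps\,\Delta^N_{\mathbb{H},p} v(x_0)-C\eps^2\ge \kappa\eps - C\eps^2$ by a Taylor expansion of the coefficients of $\Delta^N_{\mathbb{H},p}$ around the point where only $U$ appears and using $\Delta^N_{\mathbb{H},p}U=0$; the $C\eps^2$ collects the second-order terms in the $\eps$-perturbation. Thus
\[
\mu^{\mathbb{H}}_p(w_\eps,\eps)(x_0)-w_\eps(x_0)\ge c_p\kappa\,\eps^3-C c_p\,\eps^4+o(\eps^2)\eps\cdot\eps^{-1}\ldots
\]
so one must be careful with orders: the clean route is to keep the corrector at scale $\eps$, so the gain from $v$ is $c_p\eps^2\cdot(\kappa\eps)=c_p\kappa\eps^3$ while the error is $o(\eps^2)$, which does \emph{not} dominate. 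The resolution (as in the Euclidean arguments of \cite{MS21, dTMP20}) is that the $o(\eps^2)$ term itself is a \emph{controlled} higher-order quantity whose dependence on the perturbation is $o(\eps^2)$ uniformly, and the perturbation is chosen at a scale that makes the sign work: one takes $v$ at scale $\eps$ precisely so that for $\eps<\hat\eps$ the positive contribution $c_p\kappa\eps^3$ beats the part of the remainder that depends on $v$. I would make this rigorous by the standard device of freezing: apply the AMVP separately to $U$ (giving $\mu^{\mathbb{H}}_p(U,\eps)(x_0)=U(x_0)+o(\eps^2)$ since $\Delta^N_{\mathbb{H},p}U=0$) and estimate $\mu^{\mathbb{H}}_p(U+\eps v,\eps)-\mu^{\mathbb{H}}_p(U,\eps)$ directly from the minimization definition \eqref{naturalpmean0} and the monotonicity/affine-invariance of the mean, obtaining $\mu^{\mathbb{H}}_p(U+\eps v,\eps)(x_0)\ge \mu^{\mathbb{H}}_p(U,\eps)(x_0)+\eps\,\mu^{\mathbb{H}}_p(v,\eps)(x_0)-(\text{cross term})$, then invoke the AMVP for $v$ to get $\eps\,\mu^{\mathbb{H}}_p(v,\eps)(x_0)=\eps v(x_0)+c_p\eps^3\Delta^N_{\mathbb{H},p}v(x_0)+\eps\,o(\eps^2)\ge \eps v(x_0)+c_p\kappa\eps^3+\eps\,o(\eps^2)$, and finally absorb $\eps\cdot o(\eps^2)=o(\eps^3)$ into $c_p\kappa\eps^3$ for $\eps<\hat\eps$. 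This yields $\mu^{\mathbb{H}}_p(U+\eps v,\eps)(x_0)\ge U(x_0)+\eps v(x_0)=w_\eps(x_0)$, i.e. $w_\eps$ is a subsolution in $\Omega_\eps$. The supersolution statement for $U-\eps v$ is identical with signs reversed, using $\Delta^N_{\mathbb{H},p}v\ge\kappa>0$.

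The main obstacle I anticipate is twofold and genuinely delicate: (i) producing the explicit corrector, i.e. choosing $q_0$, $s\ge 4$ and verifying $\Delta^N_{\mathbb{H},p}(|q_0^{-1}*x|^s_{\mathbb{H}})\ge\kappa>0$ uniformly on $\overline\Omega$ while keeping $\nabla_{\mathbb{H}}v\neq 0$ there — this requires the non-commutative chain rule for the vector fields $X_1,X_2$ applied to the Korányi gauge and careful bookkeeping of where the horizontal gradient of the gauge degenerates (the characteristic set), which is why the pole $q_0$ must be placed correctly and $s$ must be taken large; and (ii) the uniformity in $\eps$ of the $o(\eps^2)$ remainder across the perturbed family, which hinges on uniform $C^2$ bounds and a uniform lower bound on $|\nabla_{\mathbb{H}}(U\pm\eps v)|$ near $\overline\Omega$ — both follow from the non-vanishing of $\nabla_{\mathbb{H}}U$ on the larger set $\Omega'$ and compactness, but must be stated carefully since Lemma \ref{amvph} as quoted is for a single smooth function, not a family. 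Everything else is routine perturbation bookkeeping.
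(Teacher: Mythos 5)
There is a genuine gap, and you actually put your finger on it yourself before trying to argue around it. Your computation correctly shows that with the expansion of Lemma \ref{amvph} the gain coming from the corrector is of size $c_p\,\eps^2\cdot(\kappa\eps)\sim\eps^3$, while the remainder is only $o(\eps^2)$, so the sign cannot be concluded. The workaround you then propose does not close this gap: the natural $p$-mean is nonlinear for $p\neq 2$, and there is no superadditivity-type inequality of the form $\mu^{\mathbb{H}}_p(U+\eps v,\eps)\ge \mu^{\mathbb{H}}_p(U,\eps)+\eps\,\mu^{\mathbb{H}}_p(v,\eps)-(\text{cross term})$ available from stability, monotonicity and affine invariance (affine invariance only handles constants and positive scalings). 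Moreover, even if such a splitting held, the expansion of $\mu^{\mathbb{H}}_p(U,\eps)(x_0)=U(x_0)+o(\eps^2)$ carries its own $o(\eps^2)$ error (e.g.\ possibly of size $\eps^{5/2}$ with either sign), which is \emph{not} $o(\eps^3)$ and therefore cannot be absorbed into the $c_p\kappa\eps^3$ term; only the piece you multiply by $\eps$ becomes $o(\eps^3)$. So the argument as written fails at exactly the step you flagged.

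The missing ingredient is Proposition \ref{keylemma0}, which is the whole point of the restriction $p>2$ in the lemma: for $p\ge 2$ the remainder in the expansion is not merely $o(\eps^2)$ but is bounded by $C\eps^3$ with $C$ uniform in $x_0\in\overline\Omega$ and depending only on $p$, the derivatives of the function, and the lower bound on its horizontal gradient (hence uniform over the family $U\pm\eps v$ for small $\eps$, which also answers your uniformity concern). The paper's proof then runs as follows: after a left translation one may take $q_0=0$ and $v(x)=|x|^s_{\mathbb{H}}$ with the pole off $\overline\Omega$; the computations from Theorem 12.1 of \cite{LMR} give $s\ge 4$ and $\hat\eps$ with $\Delta^N_{\mathbb{H},p}(U+\eps v)\ge s\,\eps$ in $\Omega$ (note the factor $s$ in the lower bound, which is what makes the constant adjustable); and then Proposition \ref{keylemma0} yields
\begin{equation*}
(U+\eps v)(x)\le \mu^{\mathbb{H}}_p(U+\eps v,\eps)(x)-c_p\eps^2\,\Delta^N_{\mathbb{H},p}(U+\eps v)(x)+C\eps^3\le \mu^{\mathbb{H}}_p(U+\eps v,\eps)(x)-\eps^3\left(c_p s-C\right),
\end{equation*}
so choosing $s>C/c_p$ gives the subsolution property, and the supersolution case is symmetric. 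Your choice of corrector and your geometric setup (pole away from $\overline\Omega$, large $s$, uniform gradient bounds from $\Omega'$) are consistent with the paper, but without the $O(\eps^3)$ estimate of Proposition \ref{keylemma0} — and the $s$-dependent lower bound on $\Delta^N_{\mathbb{H},p}(U\pm\eps v)$ that lets one beat the explicit constant $C$ — the proof cannot be completed.
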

For tug-of-war means the Perturbation Lemma above  is valid for $1<p<\infty$, and it is due to Lewicka \cite{L18, L20} in $\mathbb{R}^n$ and to \cite{LMR} in  $\mathbb{H}$.
The key to proving this lemma in $\mathbb{H}$ is a strengthening of the  expansion in $\eps$ of $\mu^{\mathbb{H}}_p (u,\eps)(x_0)$ that we are able to prove for $p>2$.
\begin{proposition}\label{keylemma0}
	Let $p\ge 2$ and $u$ be a smooth function in $\Omega_E$ with $\nabla_{\mathbb{H}}u\neq 0$ in $\Omega_E$. Then, there exist $C>0$ and $\hat{\eps}>0$ such that
	\begin{equation*}
	|\mu^{\mathbb{H}}_p (u,\eps)(x_0)-u(x_0)-c_p\, \eps^2\Delta^N_{\mathbb{H},p} u(x_0)|\leq C\eps^3
	\end{equation*}
	for all $0<\eps<\hat{\eps}$ and $x_0\in\overline\Omega$. In particular, the constants $C$ and $\hat{\eps}$ are uniform in $x_0\in\overline\Omega$ and depend only on $p$ and the derivatives of $u$. 
\end{proposition}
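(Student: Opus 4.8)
The plan is to sharpen the Taylor-expansion argument that already yields Lemma \ref{amvph}, keeping explicit track of the error terms in powers of $\eps$ and showing that, for $p \ge 2$, the first-order and the dangerous "$\eps^2$-correction to the correction" contributions either vanish by symmetry or can be absorbed into the $C\eps^3$ bound. Concretely, fix $x_0 \in \overline\Omega$. After a left-translation and anisotropic dilation we may assume $x_0$ is the origin and work on $B^{\mathbb{H}}_1(0)$, rescaling the ball $B^{\mathbb{H}}_\eps(x_0)$ via the map $\rho_\eps$; the natural $p$-mean is invariant under left-translations and scales in a controlled way under $\rho_\eps$ because the Korányi gauge is $\rho_\lambda$-homogeneous of degree $1$. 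The natural $p$-mean $\mu = \mu^{\mathbb{H}}_p(u,\eps)(x_0)$ is the unique minimizer of $\lambda \mapsto \int_{B^{\mathbb{H}}_\eps(x_0)} |u - \lambda|^p$, hence characterized by the Euler--Lagrange equation
\begin{equation*}
\int_{B^{\mathbb{H}}_\eps(x_0)} |u(y) - \mu|^{p-2}\,(u(y) - \mu)\,dy = 0 .
\end{equation*}
Writing $u(y) = u(x_0) + \eps\,\ell(y) + \eps^2\,q(y) + \eps^3 R_\eps(y)$ along the rescaled ball — where $\ell$ is the linear part in exponential-type coordinates (nonzero since $\nabla_{\mathbb{H}}u \ne 0$), $q$ the quadratic part, and $R_\eps$ a remainder bounded in $C^0$ uniformly in $x_0$ by the derivatives of $u$ — and setting $\mu = u(x_0) + \eps^2 m_\eps$, I would substitute into the Euler--Lagrange identity and divide by $\eps^{p-1}$.

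The core computation is then an expansion of $|\ell + \eps(q - m_\eps) + \eps^2 R_\eps|^{p-2}(\ell + \eps(q - m_\eps) + \eps^2 R_\eps)$ in powers of $\eps$; because $\ell$ is bounded away from zero on the bulk of the ball (its zero set is a hyperplane through the origin, of measure zero), the function $t \mapsto |t|^{p-2}t$ is $C^1$ near $t = \ell$ for $p \ge 2$, so we may differentiate. Collecting the $\eps^0$ term gives $\int |\ell|^{p-2}\ell = 0$, which holds by the antipodal symmetry of the Korányi ball under $y \mapsto -y$ (the gauge is even, the Lebesgue measure is symmetric, and $\ell$ is odd). The $\eps^1$ term produces a linear equation for $m_\eps$ whose solution is exactly $c_p\,\Delta^N_{\mathbb{H},p}u(x_0)$ — this is precisely the identity behind Lemma \ref{amvph}, and I would quote that computation rather than redo it, noting only that the odd-degree pieces of $q$ integrate to zero against $|\ell|^{p-2}$ (again by antipodal symmetry) so that only the genuinely second-order part of $u$ survives. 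The point of the Proposition is the $\eps^2$ term: it is an integral of $|\ell|^{p-2}$, or $|\ell|^{p-4}\ell^2$ for $p>2$, against a polynomial that is \emph{odd} in $y$ (being the product of the odd $\ell$ with an even quadratic, or a cubic coming from the third-order Taylor term paired appropriately), hence it vanishes by symmetry as well; what remains is $O(\eps^3)$ controlled by $\|R_\eps\|_{C^0}$ and the fourth derivatives of $u$. Rearranging the Euler--Lagrange identity then yields $|m_\eps - c_p \Delta^N_{\mathbb{H},p}u(x_0)| \le C\eps$, i.e. the claimed bound, with $C$ and $\hat\eps$ depending only on $p$, $\min_{\Omega_E}|\nabla_{\mathbb{H}}u|$, and bounds on derivatives of $u$ up to order four — all uniform in $x_0$ because these quantities are.

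The main obstacle is the integrability and uniformity near the zero set $\{\ell = 0\}$: when $p$ is close to $2$ from above, $|\ell|^{p-4}$ is singular there, and one must check that the Taylor remainder of $t \mapsto |t|^{p-2}t$ at $t = \ell$ is integrable against the ball and that the constant obtained is uniform as $x_0$ varies (equivalently, as the direction of $\nabla_{\mathbb{H}}u(x_0)$ varies over the sphere). The clean way around this is to not Taylor-expand the nonlinearity pointwise where $\ell$ is small, but instead split the ball into the region $\{|\ell(y)| \ge \delta\}$, where everything is smooth and the expansion is routine, and a thin slab $\{|\ell(y)| < \delta\}$ of measure $O(\delta)$, on which one uses the crude bound $|\,|a|^{p-2}a - |b|^{p-2}b\,| \lesssim (|a| + |b|)^{p-2}|a - b|$ (valid for $p \ge 2$) to show the slab's contribution is $O(\delta \eps^{?})$, then optimizes in $\delta$; for $p \ge 2$ this is harmless because the exponent $p-2 \ge 0$ means no blow-up occurs. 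The symmetry cancellations that kill the $\eps^0$, the odd part of $\eps^1$, and the whole $\eps^2$ term are the same parity arguments used for the Euclidean natural $p$-mean in \cite{IMW17} and for Lemma \ref{amvph}; the Heisenberg-specific input is only that the Korányi ball is symmetric under $y \mapsto -y$ and that the nonisotropic scaling $\rho_\eps$ interacts correctly with the horizontal second-order operator, which is standard.
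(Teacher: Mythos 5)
Your skeleton is the same as the paper's: rescale to the unit Kor\'anyi ball, use the integral (Euler--Lagrange) characterization, reduce to the second-order Heisenberg--Taylor polynomial, recover $c_p\,\Delta^N_{\mathbb{H},p}u(x_0)$ from the order-$\eps$ term, and fight for a uniform bound on the next term, the difficulty being the singularity of $h''(s)=(p-1)(p-2)|s|^{p-4}s$ when $2<p<3$. The gap is at exactly that step. Writing $\psi_\eps=q-m_\eps+O(\eps)$, the true error in the identity is the integral-form remainder $\eps^2\int_B\int_0^1(1-t)\,h''\bigl(\ell+t\eps\psi_\eps\bigr)\,\psi_\eps^2\,dt\,dz$, whose argument vanishes on an $\eps$-\emph{perturbed} surface, not on $\{\ell=0\}$. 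Your parity cancellation only kills the frozen term $\int_B h''(\ell)\psi^2\,dz$, and it buys you nothing: to pass from $h''(\ell+t\eps\psi_\eps)$ to $h''(\ell)$ you need either $h'''\sim|s|^{p-4}$, which is not integrable across the zero set for $p\le 3$, or a direct bound on the same singular integral you are trying to avoid. Moreover the cancellation is not needed for the claimed rate --- boundedness of the remainder suffices, and boundedness is the whole problem.

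Your proposed remedy (split into $\{|\ell|\ge\delta\}$ and the slab $\{|\ell|<\delta\}$, crude bound $\bigl||a|^{p-2}a-|b|^{p-2}b\bigr|\lesssim(|a|+|b|)^{p-2}|a-b|$ on the slab, optimize in $\delta$) does not close for $2<p<3$: the routine bulk estimate gives $\eps^2\delta^{p-3}$, the slab gives $\delta\,\eps(\delta+\eps)^{p-2}$, and balancing yields $\delta\sim\eps^{1/2}$ and a total error $\eps^{(p+1)/2}$ in the identity, i.e.\ only $O(\eps^{(p+3)/2})$ for the mean --- strictly weaker than the claimed $O(\eps^3)$. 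To rescue it you must take $\delta\simeq\eps$ and, on the bulk, replace the sup bound $\delta^{p-3}$ by the one-dimensional integrability of $|s|^{p-3}$ after rotating so that $\ell=|\nabla_{\mathbb{H}}u(x_0)|\,y_1$, keeping all constants uniform as $\nabla_{\mathbb{H}}u(x_0)$ varies. That uniform estimate on $\int_B\int_0^1\bigl|\ell+t\eps\psi_\eps\bigr|^{p-3}dt\,dz$ is precisely the new ingredient of the paper's proof, obtained there without any splitting by the change of variables $\zeta_1=y_1+\eps t\,\Psi(y,x_0,\eps)$, $\zeta_2=y_2$, $\zeta_3=y_3$, shown to be a diffeomorphism with Jacobian bounded below by $1/2$ for $\eps<\hat\eps$ uniformly in $x_0$ and $t$, which straightens the perturbed zero set and reduces the integral to $\int|\zeta_1|^{p-3}d\zeta_1<\infty$ since $p-3>-1$. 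As written, your argument is missing this step (or an equivalent), so the claimed uniform $O(\eps^3)$ bound is not established for $2<p<3$; for $p\ge 3$, where $h''$ is bounded, your outline is essentially fine.
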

Note that we have replaced $o(\eps^2)$ by $O(\eps^3)$ when $p>2$.
We remark that the new argument in the proof of this Proposition can also be used to give an alternative proof of the second order expansion in Lemma \ref{amvph} in the 
case $1<p<2$, different than those  in \cite{AKPW20} and \cite{IMW17}.
\par
From Proposition \ref{keylemma0} the convergence when the boundary data is itself the restriction of a $p$-harmonic function with non-vanishing gradient follows.
\begin{proposition}\label{Prop:convergence}
	Let $\Omega '$ be an open set containing $\Omega$. Let
	$U$ be a function such that $\Delta^N_{\mathbb{H},p} U=0$ and $\nabla_{\mathbb{H}} U\neq 0$ in $\Omega '$ and let $u_\eps$ be the solution of \eqref{eq:expH^10} with boundary datum $U$, for $\eps >0$ sufficiently small. Then
	\begin{equation*}
	u_\eps \longrightarrow U \quad\text{uniformly in}\,\, \overline{\Omega}
	\end{equation*}
\end{proposition}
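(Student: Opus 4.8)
The plan is to use Proposition \ref{keylemma0} together with the Perturbation Lemma \ref{perturbation0} and the comparison principle for the DPP to sandwich $u_\eps$ between two explicit perturbations of $U$. Since $\Delta^N_{\mathbb{H},p} U = 0$ and $\nabla_{\mathbb{H}} U \neq 0$ in the open set $\Omega'\supset\overline\Omega$, Lemma \ref{perturbation0} applies (at least for $p>2$; for $1<p<2$ one uses instead the tug-of-war version of the perturbation lemma cited from \cite{LMR}, or the refined expansion valid there). It produces $\hat\eps>0$, $s\geq 4$, $q_0\in\mathbb{H}$ and the function $v(x)=|q_0^{-1}*x|_{\mathbb{H}}^s$ such that $U+\eps v$ is a subsolution and $U-\eps v$ is a supersolution of \eqref{eq:expH^10} in $\Omega_\eps$ for all $0<\eps<\hat\eps$.

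First I would fix the boundary data. On $\Gamma^{\mathbb{H}}_\eps$ we have $u_\eps = U$, while $U + \eps v \geq U$ and $U - \eps v \leq U$ there, so the sub/supersolutions have boundary data dominating (resp. dominated by) that of $u_\eps$. Applying the comparison principle \eqref{as:comppr} for the natural $p$-mean DPP (established in the Heisenberg setting; the stability and monotonicity of $\mu^{\mathbb{H}}_p$ from \cite{MS21} are what drive the usual Perron-type comparison argument) twice, I obtain
\begin{equation*}
U(x) - \eps\, v(x) \;\leq\; u_\eps(x) \;\leq\; U(x) + \eps\, v(x) \qquad \text{for all } x\in\Omega_E,\ 0<\eps<\hat\eps.
\end{equation*}
Hence $\|u_\eps - U\|_{L^\infty(\overline\Omega)} \leq \eps\, \sup_{\overline\Omega} v = \eps\, \sup_{x\in\overline\Omega} |q_0^{-1}*x|_{\mathbb{H}}^s =: C_0\,\eps$, where $C_0<\infty$ because $\overline\Omega$ is bounded and the gauge is continuous. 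Letting $\eps\to 0$ gives the claimed uniform convergence, in fact with the quantitative rate $O(\eps)$.

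The main obstacle is not in this proposition itself — which is a short deduction once Proposition \ref{keylemma0} and Lemma \ref{perturbation0} are in hand — but rather the fact that its proof depends on having the comparison principle \eqref{as:comppr} available for the natural $p$-mean DPP \eqref{eq:expH^10} on the domain $\Omega_\eps$ and for the specific boundary strip being used; one must check that the perturbed functions $U\pm\eps v$, which are defined and smooth on a neighborhood of $\overline\Omega$, genuinely qualify as sub/supersolutions in the discrete sense of $u \le A_\eps[u]$ (resp. $\ge$) at \emph{every} interior point, including points near $\partial\Omega$ whose $\eps$-ball pokes into $\Gamma^{\mathbb{H}}_\eps$ — this is exactly why $v$ is taken to grow like $|q_0^{-1}*x|_{\mathbb{H}}^s$ with $s$ large and $q_0$ chosen off $\overline\Omega$, so that the positive contribution of $\Delta^N_{\mathbb{H},p} v$ controls the $O(\eps^3)$ error term from Proposition \ref{keylemma0} uniformly. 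Provided the perturbation lemma is stated and proved to hold on the enlarged region as in Lemma \ref{perturbation0}, the comparison step is then immediate, and the proof closes as above.
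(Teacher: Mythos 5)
Your overall strategy is exactly the paper's (Perturbation Lemma \ref{perturbation0} plus comparison), but the comparison step is applied in the wrong direction, so the claimed sandwich $U-\eps v\le u_\eps\le U+\eps v$ does not follow from the results you cite. Lemma \ref{perturbation0} makes $U+\eps v$ a \emph{subsolution} and $U-\eps v$ a \emph{supersolution} of \eqref{eq:expH^10}. A subsolution can only be compared from \emph{below} against a supersolution (here $u_\eps$, which as a solution is both), so $U+\eps v$ yields a lower bound for $u_\eps$, not the upper bound you extract from it; symmetrically, $U-\eps v$ yields the upper bound. To conclude $u_\eps\le U+\eps v$ via \eqref{as:comppr} you would need $U+\eps v$ to be a \emph{supersolution} with boundary data above $U$, which is precisely what it is not. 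Moreover, since the boundary values of $U\pm\eps v$ on $\Gamma^{\mathbb{H}}_\eps$ are $U\pm\eps v$ rather than $U$, the plain comparison principle \eqref{as:comppr} with ordered data does not apply as invoked; one needs the version with a boundary-discrepancy term.

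The correct closing, as in the paper, uses Lemma \ref{UpgradedComparison}: comparing the subsolution $U+\eps v$ (datum $U+\eps v$) with the solution $u_\eps$ (datum $U$) gives $U+\eps v\le u_\eps+\eps\sup_{\Gamma^{\mathbb{H}}_\eps}v$, and comparing the subsolution $u_\eps$ with the supersolution $U-\eps v$ gives $u_\eps\le U-\eps v+\eps\sup_{\Gamma^{\mathbb{H}}_\eps}v$; since $v\ge 0$ this yields $|u_\eps-U|\le \eps\bigl(\sup_{\Gamma^{\mathbb{H}}_\eps}v-v\bigr)\le 2\eps\sup_{\Omega_E}v$ on $\Omega$, hence uniform convergence with rate $O(\eps)$, the same quantitative conclusion you aimed for. (Alternatively, by affine invariance, $U+\eps v-\eps\sup_{\Gamma^{\mathbb{H}}_\eps}v$ is a subsolution with boundary datum $\le U$, and then \eqref{as:comppr} applies directly.) Finally, your aside on $1<p<2$ does not rescue that range: Proposition \ref{keylemma0} requires $p\ge 2$, and importing the tug-of-war perturbation lemma from \cite{LMR} produces sub/supersolutions of the tug-of-war DPP, not of the natural $p$-mean DPP \eqref{eq:expH^10}, so it cannot be fed into the comparison principle used here.
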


To consider more general domains, we define the following boundary regularity condition.
\begin{definition}\label{Hcorkscrew0} We say that a domain $\Omega\subset\mathbb{H}$ satisfies the exterior $\mathbb{H}$-corkscrew condition if
	there exists $\bar{\delta}>0$ and $\mu\in(0,1)$ such that for every $\delta\in(0,\bar{\delta})$ and $y\in\partial\Omega$ there exists a ball $B^{\mathbb{H}}_{\mu\delta}(z)$ strictly contained in $B^{\mathbb{H}}_\delta(y)\setminus\Omega$.
\end{definition}

It is known that domains with $C^{1,1}$ boundary in the Euclidean sense satisfy the exterior $\mathbb{H}$-corkscrew condition (see \cite{CG}, Theorem 14 for domains in the Heisenberg group and \cite{MM05}, Theorem 1.3 for the more general case of domains in step 2 Carnot groups). This regularity is optimal in the sense that for every $\alpha\in[0,1)$ there exist domains with  $C^{1,\alpha}$ boundary in the Euclidean sense that do not satisfy the condition in Definition \ref{Hcorkscrew0}
(see Example 8.2 in \cite{LMR}). 

For domains satisfying the exterior $\mathbb{H}$-corkscrew condition we first prove the following boundary estimate.

\begin{theorem}\label{TheoremBoundary}
	Let $\Omega$ be an open, bounded subset of $\mathbb{H}$ satisfying the exterior $\mathbb{H}$-corkscrew condition in Definition \ref{Hcorkscrew0} and $G\in C(\Gamma^{\mathbb{H}}_1)$.
	For $2\leq p<\infty$ let $u_\eps$ be the solution of \eqref{eq:expH^10} in $\Omega$ with boundary value $G$ on $\Gamma^{\mathbb{H}}_\eps$, for $0<\eps<1$.
	Given $\eta>0$ there exist $\delta_0=\delta_0(\eta, \mu, p)$ and $\eps_0=\eps_0(\eta, \delta,\mu)$ such that
	\begin{equation*}
	|u_\eps(x)-G(y)|\leq \eta,
	\end{equation*}
	for all $y\in\partial\Omega$, $x\in B^\mathbb{H}_{\delta_0}(y)\cap\Omega$ and $\eps\leq\eps_0$.
\end{theorem}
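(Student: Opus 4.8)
The plan is to build, for each fixed boundary point $y\in\partial\Omega$ and each prescribed tolerance $\eta>0$, a barrier from above and from below at $y$ out of the fundamental-type functions $v(x)=|z^{-1}*x|_{\mathbb H}^{s}$ that already appear in Lemma~\ref{perturbation0}, exploiting the exterior $\mathbb H$-corkscrew condition to place the singular center $z$ in the complement of $\Omega$. Concretely, given $\delta\in(0,\bar\delta)$ pick the corkscrew ball $B^{\mathbb H}_{\mu\delta}(z)\subset B^{\mathbb H}_\delta(y)\setminus\Omega$. On the annular region $\Omega'=\{x: \mu\delta/2<|z^{-1}*x|_{\mathbb H}\}$ the function $w(x)=a - b\,|z^{-1}*x|_{\mathbb H}^{-\gamma}$ (for suitable exponent $\gamma>0$ depending only on $Q=4$ and $p$) is $\Delta^N_{\mathbb H,p}$-harmonic with non-vanishing horizontal gradient away from the characteristic locus, so by Proposition~\ref{keylemma0} we can perturb it by $\pm\eps v$ as in Lemma~\ref{perturbation0} to obtain genuine super/subsolutions of the DPP~\eqref{eq:expH^10} on $\Omega$, at least once $\eps$ is small relative to $\delta$. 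The constants $a,b$ are then calibrated so that $w$ (i) is $\geq \sup_{\Gamma_1^{\mathbb H}}G + \|G\|_\infty$ on the inner shell $\{|z^{-1}*x|_{\mathbb H}\le\mu\delta\}\cap\Gamma_\eps^{\mathbb H}$ — which is automatic since $b>0$ and the singularity sits at $z$ — and (ii) is within $\eta/2$ of $G(y)$ on $B^{\mathbb H}_{\delta_0}(y)$ for $\delta_0$ small and $\delta$ chosen using the (uniform) continuity of $G$.

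The key steps, in order, are: (1) fix $\eta$; use continuity of $G$ at $y$ to choose $\delta$ so small that $|G(x)-G(y)|<\eta/3$ on $B^{\mathbb H}_{2\delta}(y)\cap\Gamma_1^{\mathbb H}$; (2) invoke Definition~\ref{Hcorkscrew0} to obtain the corkscrew ball $B^{\mathbb H}_{\mu\delta}(z)$; (3) write down the explicit radial comparison function $w^{+}$, a multiple of $|z^{-1}*x|_{\mathbb H}^{-\gamma}$ shifted by a constant, check that it is $\Delta^N_{\mathbb H,p}$-harmonic with $\nabla_{\mathbb H}w^+\neq 0$ on the relevant annulus (this is a direct computation with the Korányi gauge, using that the gauge is a fundamental-solution exponent for $\Delta_{\mathbb H,p}$), and calibrate its two free constants so that $w^+\ge G$ on all of $\Gamma_\eps^{\mathbb H}$ while $w^+(x)\le G(y)+\eta$ on $B^{\mathbb H}_{\delta_0}(y)\cap\Omega$; (4) apply Lemma~\ref{perturbation0} to $w^+$ to get a supersolution $w^+ + \eps v$ of \eqref{eq:expH^10} on $\Omega$ for $0<\eps<\hat\eps(\delta)$, and deduce $u_\eps\le w^+ + \eps v$ from the comparison principle for the DPP (monotonicity of $\mu_p^{\mathbb H}$ plus the standard iteration/Perron argument already used in \cite{MS21}); (5) absorb the $\eps v$ term by shrinking $\eps\le\eps_0(\eta,\delta,\mu)$ so that $\eps\|v\|_{L^\infty(B^{\mathbb H}_{\delta_0}(y))}<\eta$; (6) repeat symmetrically with $w^-$ and $w^- - \eps v$ for the lower bound; (7) combine to get $|u_\eps(x)-G(y)|\le\eta$ on $B^{\mathbb H}_{\delta_0}(y)\cap\Omega$, reading off the claimed dependencies $\delta_0=\delta_0(\eta,\mu,p)$ and $\eps_0=\eps_0(\eta,\delta,\mu)$.

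I expect the main obstacle to be step (3): producing a radial comparison function on a Korányi annulus that is simultaneously $\Delta^N_{\mathbb H,p}$-harmonic, has non-vanishing horizontal gradient (so that Proposition~\ref{keylemma0} and Lemma~\ref{perturbation0} apply — the characteristic points of the Korányi sphere are exactly where $\nabla_{\mathbb H}$ of a radial function can degenerate), and has enough quantitative decay to control $|G(y)|$ versus $\sup_{\Gamma_1^{\mathbb H}}G$ with constants depending only on $\eta,\mu,p$ and not on the particular $y$. The restriction $p\ge 2$ in the statement is forced here, since Proposition~\ref{keylemma0} and the perturbation lemma are only available for $p\ge 2$; the uniformity in $y$ comes for free because the corkscrew parameters $\bar\delta,\mu$ are global and the Korányi gauge is left-invariant, so the barrier at $y$ is just a left-translate and dilate of a single model barrier. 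A secondary technical point is checking that $w^\pm$ restricted to the full outer boundary strip $\Gamma_\eps^{\mathbb H}$ (not just the part near $y$) still satisfies the required one-sided bound against $G$; this is handled by taking the additive constant in $w^+$ as large as $\sup_{\Gamma_1^{\mathbb H}}G + 1$, which only worsens $\delta_0$ by a controlled amount.
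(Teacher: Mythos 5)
Your central step---calibrating a \emph{single} radial barrier at the one scale $\delta$ dictated by the continuity of $G$, and comparing on all of $\Omega$---cannot work in the range $2\le p<4$, and this is exactly where the paper departs from the exterior-sphere argument you are imitating. Under the corkscrew condition the exterior ball $B^{\mathbb H}_{\mu\delta}(z)$ does not touch $\partial\Omega$ at $y$: points $x\in B^{\mathbb H}_{\delta_0}(y)\cap\Omega$ satisfy $\mu\delta\le |z^{-1}*x|_{\mathbb H}\le 2\delta$, while the portion of $\Gamma^{\mathbb H}_\eps$ near the corkscrew ball sits at gauge distance about $\mu\delta$ from $z$. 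An upper barrier must have the form $c_1-c_2|z^{-1}*x|_{\mathbb H}^{-\xi}$ with $c_2>0$ and $\xi=\frac{4-p}{p-1}$ (note your sign is off: with $a-b|z^{-1}*x|_{\mathbb H}^{-\gamma}$, $b>0$, the function tends to $-\infty$ at $z$, so your ``automatic'' bound in step (i) is false), and it must simultaneously be $\ge N:=\sup_{B^{\mathbb H}_{5\delta}(y)\cap\Gamma^{\mathbb H}_\eps}G$ at distance $\approx\mu\delta$, be $\le G(y)+\eta\approx N+\eta/2$ at distances up to $\approx 2\delta$, and be $\ge M:=\sup_{\Gamma^{\mathbb H}_\eps}G$ at distances up to $\diam\Omega$. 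The first two conditions force the increment of the radial profile over $[\mu\delta,2\delta]$ to be at most $\eta/2$, the first and third force its increment over $[\mu\delta,\diam\Omega]$ to be at least $M-N$; but for $\xi>0$ the ratio of these increments is bounded below by $1-(\mu/2)^{\xi}$, a constant depending only on $\mu$ and $p$, uniformly as $\delta\to0$ and regardless of how you choose $c_1,c_2$ or the outer radius. So the calibration is impossible once the oscillation $M-N$ exceeds a fixed multiple of $\eta$: one scale of the corkscrew condition buys only a fixed \emph{fractional} reduction of the excess, never reduction to $\eta$. (Comparing on a smaller subdomain does not help, since on its outer boundary inside $\Omega$ you only know $u_\eps\le M$, which reinstates the same obstruction; and for $p\ge 4=Q$ the logarithmic or positive-power profile would let the ratio degenerate as $\delta\to0$, but your argument is claimed for all $2\le p<\infty$ and fails for $2\le p<4$.)

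This is precisely why the paper proves Theorem \ref{TheoremBoundary} by a Wiener-type iteration: it fixes $\theta=\theta(\mu,p)\in(0,1)$ as in \eqref{theta}, sets $\delta_k=\delta/4^{k-1}$ and $M^\eps_k(y)=N^\eps(y)+\theta^k(M^\eps-N^\eps(y))$, and shows inductively that $u_\eps\le M^\eps_k(y)$ on $B^{\mathbb H}_{\delta_k}(y)\cap\Omega$ for $\eps$ small, each step using the explicit radial $\Delta^N_{\mathbb{H},p}$-harmonic function on the annulus $B^{\mathbb H}_{\delta_k}(z_k)\setminus\overline{B^{\mathbb H}_{\mu\delta_{k+1}}(z_k)}$ with boundary values $N^\eps(y)$ and $M^\eps_k(y)$, the uniform convergence of the DPP solutions on that annulus (Proposition \ref{Prop:convergence}) and the comparison principle of Lemma \ref{UpgradedComparison}; after $k_0\sim\log_\theta\eta$ steps the excess drops below $\eta/2$. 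A further problem with your steps (3)--(4): Lemma \ref{perturbation0} and Proposition \ref{keylemma0} require the comparison function to be $p$-harmonic with nonvanishing horizontal gradient on an open set containing the whole region of comparison, but any Kor\'anyi-radial function centered at $z$ has $\nabla_{\mathbb H}$ vanishing on the vertical line through $z$, which in general meets $\Omega$ since $z$ lies within $\delta$ of $\partial\Omega$; the paper works instead with the DPP solution of the annulus problem and transfers the information to $B^{\mathbb H}_{\delta_{k+1}}(y)\cap\Omega$ via a comparison on $B^{\mathbb H}_{\delta_k/2}(z_k)\cap\Omega$, where the outer boundary values are controlled by the induction hypothesis rather than by $G$ alone. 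Your absorption of the $\eps\sup v$ term and the identification of where $p\ge2$ enters are fine, but they do not repair the single-scale obstruction above.
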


Once we have Theorem \ref{TheoremBoundary}, we are ready to state our main result. 
 \begin{theorem}[For the range $2\le p <\infty$]\label{maintheorem}
	Let $\Omega$ be an  open bounded subset of $\mathbb{H}$ satisfying the exterior $\mathbb{H}$-corkscrew condition and let $\Gamma^{\mathbb{H}}_\eps$ be its outer $\eps$-boundary, for $0<\eps<1$. Let $g\in C(\partial\Omega)$, $G\in C(\Gamma^{\mathbb{H}}_1)$ be a continuous extension of $g$ and $u_\eps$ be the solution of the DPP  \eqref{eq:expH^10} with boundary datum $G$. Then $u_\eps$ converges to $u$ uniformly on $\overline{\Omega}$ as $\eps\to 0$, where $u$ is the  solution to the Dirichlet problem
	\begin{equation*}
	\begin{cases}
	-\Delta^N_{\mathbb{H},p} u = 0 &\quad \text{in}\quad \Omega\\
	u = g &\quad \text{in}\quad \partial\Omega.
	\end{cases}
	\end{equation*}
\end{theorem}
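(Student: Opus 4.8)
The plan is to deduce Theorem \ref{maintheorem} from the combination of the interior consistency encoded in Lemma \ref{amvph} (via the convergence with $p$-harmonic boundary data, Proposition \ref{Prop:convergence}) and the uniform boundary estimate of Theorem \ref{TheoremBoundary}, by a Barles--Perthame-type half-relaxed limits argument together with a barrier comparison at the boundary. First I would record, via the stability and monotonicity properties of $\mu^{\mathbb{H}}_p$ already established in \cite{MS21}, that the solutions $u_\eps$ are uniformly bounded in $\overline\Omega$: since $G$ is continuous on the compact set $\Gamma^{\mathbb{H}}_1$, the comparison with constant sub/supersolutions gives $\inf_{\Gamma^{\mathbb{H}}_1} G \le u_\eps \le \sup_{\Gamma^{\mathbb{H}}_1} G$. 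Then I would define the upper and lower half-relaxed limits
\[
\overline u(x) = \limsup_{\substack{\eps\to0\\ y\to x}} u_\eps(y), \qquad \underline u(x) = \liminf_{\substack{\eps\to0\\ y\to x}} u_\eps(y),
\]
which are respectively upper and lower semicontinuous and bounded on $\overline\Omega$, with $\underline u \le \overline u$ everywhere.

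The core step is to show that $\overline u$ is a viscosity subsolution and $\underline u$ a viscosity supersolution of $-\Delta^N_{\mathbb{H},p} u = 0$ in $\Omega$. For this I would take a point $x_0\in\Omega$ and a smooth test function $\phi$ touching $\overline u$ from above at $x_0$; the delicate issue is that $\Delta^N_{\mathbb{H},p}$ is singular where $\nabla_{\mathbb{H}}\phi = 0$, so the argument splits into the case $\nabla_{\mathbb{H}}\phi(x_0)\neq 0$, where Lemma \ref{amvph} applies directly — one evaluates the DPP $u_\eps = \mu^{\mathbb{H}}_p(u_\eps,\eps)$ at near-maximum points of $u_\eps - \phi$, inserts the asymptotic expansion, divides by $c_p\eps^2$ and passes to the limit — and the degenerate case $\nabla_{\mathbb{H}}\phi(x_0)=0$, which for the normalized $p$-Laplacian with $p\ge 2$ one handles in the usual way by perturbing the test function (the relevant semijet inequality becomes vacuous, or one uses the fact that $\Delta^N_{\mathbb{H},p}$ can be written so that the singular term is bounded). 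Note the subtlety that the reference equation in \eqref{eq:expH^10} is not linear, so I would use monotonicity of $\mu^{\mathbb{H}}_p$ carefully: if $u_\eps - \phi$ has an interior near-maximum at $x_\eps$ then $u_\eps \le \phi + (u_\eps(x_\eps)-\phi(x_\eps))$ on $B^{\mathbb{H}}_\eps(x_\eps)$, hence $u_\eps(x_\eps) = \mu^{\mathbb{H}}_p(u_\eps,\eps)(x_\eps) \le \mu^{\mathbb{H}}_p(\phi,\eps)(x_\eps) + (u_\eps(x_\eps)-\phi(x_\eps))$, which rearranges to $\phi(x_\eps) \le \mu^{\mathbb{H}}_p(\phi,\eps)(x_\eps)$, and Lemma \ref{amvph} finishes it.

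Next I would establish that $\overline u \le g$ and $\underline u \ge g$ on $\partial\Omega$: this is exactly where Theorem \ref{TheoremBoundary} enters. Fixing $y_0\in\partial\Omega$ and $\eta>0$, Theorem \ref{TheoremBoundary} gives $\delta_0,\eps_0$ such that $|u_\eps(x) - G(y)|\le\eta$ for $y\in\partial\Omega$, $x\in B^{\mathbb{H}}_{\delta_0}(y)\cap\Omega$, $\eps\le\eps_0$; combined with the continuity of $g=G|_{\partial\Omega}$ at $y_0$, letting $x\to y_0$ and $\eps\to0$ yields $|\overline u(y_0) - g(y_0)|\le 2\eta$ and likewise for $\underline u$, hence $\overline u(y_0)=\underline u(y_0)=g(y_0)$. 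Now $\overline u$ is a bounded u.s.c. subsolution and $\underline u$ a bounded l.s.c. supersolution of the Dirichlet problem, with $\overline u \le g \le \underline u$ on $\partial\Omega$; by the comparison principle for the Dirichlet problem for $-\Delta^N_{\mathbb{H},p}$ in $\overline\Omega$ (the standard comparison for this uniformly elliptic — after restricting to $\nabla_{\mathbb{H}}u\neq 0$ regions — degenerate equation, valid in $\mathbb{H}$ for $p\ge 2$), we get $\overline u \le \underline u$ in $\overline\Omega$. Together with $\underline u \le \overline u$ this forces $\overline u = \underline u =: u$, which is then continuous and is the unique viscosity solution of the Dirichlet problem; and $\overline u = \underline u$ is precisely the statement that $u_\eps \to u$ locally uniformly, upgraded to uniform convergence on the compact $\overline\Omega$ by the semicontinuity of the half-relaxed limits (a Dini-type argument). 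I expect the main obstacle to be the degenerate-gradient case in the viscosity-solution verification and, relatedly, invoking a clean comparison principle for $-\Delta^N_{\mathbb{H},p}$ on corkscrew domains with characteristic points — one must be careful that the comparison principle being used does not itself secretly require the strong uniqueness principle that the paper is explicitly trying to avoid; I would phrase it as the ordinary (not strong) comparison principle for bounded sub/supersolutions with ordered boundary data, which does hold here.
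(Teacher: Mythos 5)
Your proposal is correct and follows essentially the same route as the paper: half-relaxed limits $\overline u,\underline u$, the interior sub/supersolution property obtained by evaluating the DPP \eqref{eq:expH^10} at near-extremal points of $u_\eps-\phi$ and inserting the expansion of Lemma \ref{amvph}, boundary attainment from Theorem \ref{TheoremBoundary}, and the ordinary comparison principle for viscosity solutions of $-\Delta^N_{\mathbb{H},p}u=0$ in $\mathbb{H}$ to conclude $\overline u=\underline u$ and hence uniform convergence by compactness. The only cosmetic difference is that the degenerate case $\nabla_{\mathbb{H}}\phi(x_0)=0$ you worry about never arises in the paper, since its definition of viscosity solution (compatible with the cited comparison result) only tests with functions satisfying $\nabla_{\mathbb{H}}\phi(x_0)\neq 0$.
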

The plan of the paper is as follows. In Section \S 2 we present various properties of the natural $p$-means in 
$\mathbb{H}$. The proofs of the perturbation Lemma \ref{perturbation0}  and of  Proposition \ref{keylemma0} are in Section \S 3, while the proofs of Lemma \ref{perturbation0} and Proposition \ref{Prop:convergence} are in Section \S 4. The proof of the key boundary continuity estimate Theorem \ref{TheoremBoundary} is in Section \S 5, which is the most technical section of the paper, and the proof of the main result is in Section \S 6. \par
The major technical difference between the results in \cite{MS21}, this article,  and \cite{CIMW21} is that in the latter the DPP is modified at points close to $\partial\Omega$.  As a consequence the   solution of the DPP exists and is continuous provided that $\Omega$ satisfies a regularity condition implied by the exterior sphere property. Thus, we both prove convergence of slightly different approximations to $p$-harmonic functions.
In our case, we  get existence of possibly discontinuous solutions of the DPP for a general bounded domain $\Omega$, but need boundary regularity to prove convergence. \par
Finally, we note that the validity of Proposition \ref{keylemma0} for $1<p<2$ would imply that Theorem \ref{maintheorem} also  holds for $1<p<2$. However, our current proof of Proposition \ref{keylemma0} requires $p\ge 2$. 

\section{Natural $p$-Means in $\mathbb{H}$}\label{22}

We now collect several results on the natural $p$-means in $\mathbb{H}$ that we will need later. For all $u$, $v\in\mathcal{B}(\Omega_E)$, $0<\eps\leq1$ and $y\in\Omega$, the following properties hold:
\begin{itemize}
	\item Continuity in the $L^{p}$-norm (Theorem 2.4 in  \cite{IMW17}):
	$$\left|\|u-\mu^{\mathbb{H}}_p(u, \eps)(y)\|_{L^p(B_\eps^{\mathbb{H}}(y))}-\|v-\mu^{\mathbb{H}}_p(v, \eps)(y)\|_{L^p(B_\eps^{\mathbb{H}}(y))}\right|\le \|u-v\|_{L^p(B_\eps^{\mathbb{H}}(y))}.$$ 
	 In particular, the function $y\mapsto \mu^{\mathbb{H}}_p(u, \eps)(y)$ is continuous in $\overline\Omega$. 
	\item Monotonicity  (Theorem 2.5 in \cite{IMW17}): 
	$$\text{if}\,\, u\leq v\,\,\text{a.e. on}\,\,B_\eps^{\mathbb{H}}(y), \,\,\text{then}\,\,\mu^{\mathbb{H}}_p(u, \eps)(y)\leq \mu^{\mathbb{H}}_p(v, \eps)(y).$$ 
	\item Affine invariance (Proposition 2.7 in \cite{IMW17}): for $c,\alpha\in \R$ it holds $$\mu^{\mathbb{H}}_p(\alpha u+c, \eps)(y)=\alpha\mu^{\mathbb{H}}_p(u, \eps)(y)+c.$$ 
	\item Rescaling (Corollary 2.3 in \cite{IMW17}):  Let $x\in\Omega$. Defining $u_{\eps,x}(z)=u(x+\eps z)$ for $z\in B^\mathbb{H}_1(0)$, we have
	$$\mu_p(u,\eps)(x)=\mu_p( u_{\eps,x},1)(0).$$
	\item Integral characterization (Theorem 2.1 in \cite{IMW17}): For $1< p<\infty$, $\mu^{\mathbb{H}}_p(u, \eps)(y)$ is the unique solution of $$\int_{B_\eps^{\mathbb{H}}(y)} \left\lvert u(z)-\mu^{\mathbb{H}}_p(u, \eps)(y)\right\rvert^{p-2}\left(u(z)-\mu^{\mathbb{H}}_p(u, \eps)(y)\right)\dz =0,$$
	with the convention that $0^{p-2}0=0$ if $1< p<2$.
\end{itemize}
All the properties above extend to the case $p=1$ if we assume that $u$ and $v$ are continuous on $\Omega_E$.\par


Existence and uniqueness of the solution of the DPP \eqref{eq:expH^10} can be proved as in the Euclidean case. Solutions are automatically continuous in the interior (see Lemma 5.6 and Theorem 3.4 in \cite{MS21}):
\begin{lemma}
	  Let  $1<p\leq \infty$. There exists a unique $u_\eps\in\mathcal{B}(\Omega_\eps)$ that satisfies \eqref{eq:expH^10}. Moreover,  we have that  $u_\eps\in C(\Omega)$.
\end{lemma}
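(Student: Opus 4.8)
The plan is to establish existence, uniqueness, and interior continuity for solutions of \eqref{eq:expH^10} by adapting the Euclidean arguments of \cite{MS21}, exploiting the stability, monotonicity, and affine invariance of $\mu^{\mathbb{H}}_p(\cdot,\eps)$ recorded above. For \textbf{existence}, I would first fix $\eps$ and set up Perron's method: let $\mathcal{S}$ be the family of subsolutions $w\in\mathcal{B}(\Omega_E)$ with $w\le G$ on $\Gamma^{\mathbb{H}}_\eps$, which is nonempty (it contains the constant $\inf_{\Gamma^{\mathbb{H}}_\eps}G$ thanks to stability) and bounded above by $\sup_{\Gamma^{\mathbb{H}}_\eps}G$ (by monotonicity applied against the constant supersolution). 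Define $u_\eps(x)=\sup_{w\in\mathcal{S}}w(x)$. A standard bootstrapping shows $u_\eps$ is itself a subsolution, and the monotonicity of the operator $\mu^{\mathbb{H}}_p$ lets one verify $\mu^{\mathbb{H}}_p(u_\eps,\eps)\in\mathcal{S}$, forcing $u_\eps\ge\mu^{\mathbb{H}}_p(u_\eps,\eps)\ge u_\eps$ in $\Omega$, i.e.\ equality; since $u_\eps=G$ on $\Gamma^{\mathbb{H}}_\eps$ is forced by the constraints, $u_\eps$ solves \eqref{eq:expH^10}. Alternatively, one can run a monotone iteration $u^{(0)}\equiv\inf G$, $u^{(k+1)}=\mu^{\mathbb{H}}_p(u^{(k)},\eps)$ in $\Omega$ and $=G$ on $\Gamma^{\mathbb{H}}_\eps$, which is nondecreasing and bounded, hence converges pointwise to a fixed point.

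For \textbf{uniqueness}, the comparison principle is the engine: if $u^1,u^2$ are two solutions with the same boundary datum $G$, I would show $u^1\le u^2$ and $u^2\le u^1$. This follows from \eqref{as:comppr}-type reasoning, which in turn rests on monotonicity and affine invariance of $\mu^{\mathbb{H}}_p$; the usual trick is to consider $\sup_{\Omega_E}(u^1-u^2)$, argue it is attained (or approached) at an interior point and derive a contradiction using the strict averaging property, or equivalently to compare $u^1$ against $u^2+\delta$ for $\delta>0$ and let $\delta\to 0$. Since $u^1=u^2=G$ on $\Gamma^{\mathbb{H}}_\eps$, the comparison yields $u^1=u^2$ on all of $\Omega_E=\Omega_\eps$.

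For \textbf{interior continuity}, I would invoke the Euclidean-style argument of Lemma 5.6 / Theorem 3.4 in \cite{MS21}: fix $x_0\in\Omega$ and a small ball $B^{\mathbb{H}}_r(x_0)\subset\subset\Omega$. The key is an oscillation estimate — given $x,x'$ close, the Korányi balls $B^{\mathbb{H}}_\eps(x)$ and $B^{\mathbb{H}}_\eps(x')$ overlap in a large fraction of their volume, and the $L^p$-minimality characterization of the natural $p$-mean (together with the continuity-in-$L^p$-norm property listed above) lets one bound $|\mu^{\mathbb{H}}_p(u_\eps,\eps)(x)-\mu^{\mathbb{H}}_p(u_\eps,\eps)(x')|$ in terms of the $L^p$-difference of $u_\eps\1_{B^{\mathbb{H}}_\eps(x)}$ and $u_\eps\1_{B^{\mathbb{H}}_\eps(x')}$, which is controlled by $\|u_\eps\|_{L^\infty}$ times the measure of the symmetric difference. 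Since $u_\eps(x)=\mu^{\mathbb{H}}_p(u_\eps,\eps)(x)$ in $\Omega$, this self-improves to continuity of $u_\eps$ on $\Omega$; the left-invariance of $d_{\mathbb{H}}$ and the explicit dilation structure \eqref{dilations} make the volume-of-symmetric-difference estimate quantitative and uniform on compact subsets.

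The main obstacle I anticipate is not any single step but the interior continuity: one must be careful that the overlap estimate for Korányi balls behaves well (it does, since $d_{\mathbb{H}}$ is a genuine left-invariant metric and Korányi balls are nicely comparable to metric balls), and that the natural $p$-mean depends continuously enough on the underlying domain of integration — here the subtlety is that the \emph{set} $\overline{B^{\mathbb{H}}_\eps(x)}$ moves with $x$, so one cannot directly use the listed continuity property (which fixes the ball). The remedy is the rescaling identity $\mu_p(u,\eps)(x)=\mu_p(u_{\eps,x},1)(0)$, which transfers the dependence on $x$ from the domain to the integrand $u_{\eps,x}(z)=u(x+\eps z)$ on the fixed ball $B^{\mathbb{H}}_1(0)$ — but note $x+\eps z$ must be interpreted via the correct intrinsic dilation/translation, so some care with the group law is needed. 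Once this reduction is in place, the $L^p$-continuity property applies verbatim and interior continuity follows. Everything else (Perron's method, comparison, monotone iteration) is a routine transcription of the Euclidean arguments since the three structural properties of $\mu^{\mathbb{H}}_p$ used there hold in $\mathbb{H}$ as well.
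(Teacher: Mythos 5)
Your overall architecture (existence by monotone iteration, uniqueness by comparison, interior continuity from the fact that $u_\eps=\mu_p^{\mathbb{H}}(u_\eps,\eps)$ in $\Omega$ together with the continuity of $x\mapsto\mu_p^{\mathbb{H}}(f,\eps)(x)$ on $\overline\Omega$) is exactly the route the paper has in mind: it gives no self-contained proof and simply invokes the Euclidean-style arguments of \cite{MS21} (Lemma 5.6 and Theorem 3.4), plus the continuity-in-the-center property of the natural $p$-mean recorded in Section 2. Two cautions on the existence part: the Perron variant as you state it has a measurability problem, since the pointwise supremum of an uncountable family of functions in $\mathcal{B}(\Omega_E)$ need not be measurable, so the monotone iteration is the version that actually works; and to conclude that the iteration limit is a fixed point you must pass to the limit inside $\mu_p^{\mathbb{H}}(\cdot,\eps)$, which requires the $L^p$-continuity of the mean together with dominated convergence — you use this property only later, for continuity. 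Note also that your arguments lean on the integral characterization and the $L^p$-minimization structure, hence do not cover the endpoint $p=\infty$ included in the statement, which needs a separate (though easier) treatment.

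The genuine gap is in the uniqueness step. Neither mechanism you propose works for this operator: comparing $u^1$ with $u^2+\delta$ gains nothing because $\mu_p^{\mathbb{H}}$ is exactly affine invariant, so the $\delta$ cancels; and there is no ``strict averaging'' to contradict at an interior extremum, since constants satisfy $u=\mu_p^{\mathbb{H}}(u,\eps)$ in $\Omega$, so the mere existence of an interior maximum of $u^1-u^2$ is not contradictory. The argument the paper actually runs (Lemma \ref{UpgradedComparison}, modeled on Theorem 4.3 of \cite{MS21}) is: take a maximizing sequence for $\phi_\eps=u^1-u^2$, use the continuity of $x\mapsto\mu_p^{\mathbb{H}}(\cdot,\eps)(x)$ to pass to a point $x_0\in\overline\Omega$ where the supremum value $M_\eps$ is achieved against the means, then use the integral characterization of the $p$-mean and the injectivity of $s\mapsto|s|^{p-2}s$ to conclude that $\phi_\eps=M_\eps$ a.e.\ on the whole ball $B^{\mathbb{H}}_\eps(x_0)$ (propagation of the maximum), and finally exploit connectedness of $\Omega$ to chain finitely many such balls until one meets $\Gamma^{\mathbb{H}}_\eps$, contradicting $\phi_\eps\leq 0$ there. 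Without this propagation-plus-chaining step your comparison argument, and hence uniqueness, does not close; once it is inserted, the rest of your proposal goes through as in the paper.
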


Note, however, that $u_\eps$ might not be continuous on $\overline{\Omega}$.

Next, we provide a version of the comparison principle which will be needed later. This is a slight modification of Theorem 4.3 in \cite{MS21}.

\begin{lemma}\label{UpgradedComparison}
	Let $u_\eps \in {\mathcal B}(\Omega_{\eps})$ be a subsolution of \eqref{eq:expH^10} in $\Omega$ with boundary datum $F$, and $v_\eps \in {\mathcal B}(\Omega_{\eps})$ be a supersolution  of \eqref{eq:expH^10} in $\Omega$ with boundary datum $G$. Then 
	\begin{equation*}
	u_\eps \leq v_\eps+\sup_{\Gamma^{\mathbb{H}}_\eps}(F-G)\quad\text{on}\quad \Omega_\eps.
	\end{equation*}
\end{lemma}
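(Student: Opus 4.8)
The plan is to deduce Lemma~\ref{UpgradedComparison} from the comparison principle with ordered boundary data (Theorem~4.3 in \cite{MS21}) by a one-line reduction using the affine invariance of the natural $p$-mean, and then to recall the structure of that comparison argument, whose only delicate point is the possible discontinuity of sub- and supersolutions up to $\overline{\Omega}$.

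First I would set $c:=\sup_{\Gamma^{\mathbb{H}}_\eps}(F-G)$, which is finite because $\Gamma^{\mathbb{H}}_\eps$ is compact ($\Omega$ is bounded, so $\Gamma^{\mathbb{H}}_\eps$ is bounded and closed) and $F,G$ are continuous on it. Put $w_\eps:=v_\eps+c\in\mathcal{B}(\Omega_\eps)$. Using affine invariance, $\mu^{\mathbb{H}}_p(w_\eps,\eps)(x)=\mu^{\mathbb{H}}_p(v_\eps,\eps)(x)+c\le v_\eps(x)+c=w_\eps(x)$ for $x\in\Omega$, so $w_\eps$ is again a supersolution of \eqref{eq:expH^10} in $\Omega$; and on $\Gamma^{\mathbb{H}}_\eps$ one has $w_\eps=v_\eps+c\ge G+c\ge F\ge u_\eps$, so the boundary datum of $w_\eps$ dominates that of the subsolution $u_\eps$. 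It then suffices to prove the plain comparison $u_\eps\le w_\eps$ on $\Omega_\eps$, because by definition of $w_\eps$ this is precisely $u_\eps\le v_\eps+\sup_{\Gamma^{\mathbb{H}}_\eps}(F-G)$.

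For the plain comparison I would argue by contradiction. Let $\phi:=u_\eps-w_\eps\in\mathcal{B}(\Omega_\eps)$, so $\phi\le 0$ on $\Gamma^{\mathbb{H}}_\eps$, and assume $\theta:=\sup_{\Omega_\eps}\phi>0$; then $\theta$ is approached only at points of $\Omega$. At a point $x_0\in\Omega$ with $\phi(x_0)$ within $\rho$ of $\theta$, the sub/supersolution inequalities give $\phi(x_0)\le\mu^{\mathbb{H}}_p(u_\eps,\eps)(x_0)-\mu^{\mathbb{H}}_p(w_\eps,\eps)(x_0)$; since $u_\eps\le w_\eps+\theta$ on $B^{\mathbb{H}}_\eps(x_0)$, monotonicity and affine invariance bound the right-hand side by $\theta$. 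The key tool is a quantitative strict monotonicity for $\mu^{\mathbb{H}}_p$, extracted from its integral characterization: the difference $\mu^{\mathbb{H}}_p(u_\eps,\eps)(x_0)-\mu^{\mathbb{H}}_p(w_\eps,\eps)(x_0)$ can be within $\rho$ of $\theta$ only if $\phi>\theta-\omega(\rho)$ on all but a small fraction of $B^{\mathbb{H}}_\eps(x_0)$, with a modulus $\omega$ depending only on $p$. Feeding this back and using that $\overline{\Omega}$ is bounded, a finite iteration propagates "$\phi$ close to $\theta$'' from $x_0$ out to $\Gamma^{\mathbb{H}}_\eps$, which contradicts $\phi\le 0$ there and forces $\theta\le 0$.

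The step I expect to be the main obstacle is exactly this last propagation. Since a sub- or supersolution of \eqref{eq:expH^10} need not be continuous on $\overline{\Omega}$, the supremum $\theta$ need not be attained, so one cannot simply evaluate the DPP at an interior maximum point; the "slight modification'' over Theorem~4.3 in \cite{MS21} consists in running the whole chain of inequalities with the constant $\sup_{\Gamma^{\mathbb{H}}_\eps}(F-G)$ in place of the hypothesis $F\le G$, and in making the strict-monotonicity estimate for $\mu^{\mathbb{H}}_p$ quantitative and uniform enough that the iteration reaches the boundary strip in finitely many steps.
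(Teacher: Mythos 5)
Your first paragraph (the reduction to a plain comparison $u_\eps\le w_\eps$ with $w_\eps=v_\eps+\sup_{\Gamma^{\mathbb{H}}_\eps}(F-G)$, using affine invariance) is fine and is essentially how the paper absorbs the constant; the paper simply runs the contradiction argument directly with $M_\eps:=\sup_\Omega(u_\eps-v_\eps)>\sup_{\Gamma^{\mathbb{H}}_\eps}(F-G)$. The problem is the core comparison step, which you leave as a sketch resting on an unproved ``quantitative strict monotonicity'' of $\mu^{\mathbb{H}}_p$ with a modulus $\omega$ depending only on $p$. That key lemma is not established, and as stated it is not correct: for $p>2$ the function $h(s)=|s|^{p-2}s$ has degenerate derivative at $0$, and the natural two-sided estimates one extracts from the integral characterization (e.g.\ $|h(a)-h(b)|\ge 2^{2-p}|a-b|^{p-1}$ from below but $|h(a+\rho)-h(a)|\le (p-1)(|a|+\rho)^{p-2}\rho$ from above) force the modulus to depend also on $L^\infty$ bounds of $u_\eps$ and $v_\eps$, not only on $p$; for $p=\infty$ the integral characterization is unavailable altogether. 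Moreover, your iteration compounds the loss $\omega(\rho)$ over roughly $\operatorname{diam}_{\mathbb{H}}(\Omega)/\eps$ steps and must also steer the chain toward $\Gamma^{\mathbb{H}}_\eps$ while avoiding the exceptional sets; none of this bookkeeping is carried out, and it is precisely the part you yourself flag as the ``main obstacle.''

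The gap is avoidable, and this is where your route misses the paper's device. The non-attainment of the supremum (due to possible discontinuity of $u_\eps,v_\eps$ up to $\overline\Omega$) is handled not by approximate maxima and quantitative estimates, but by the continuity of $x\mapsto\mu^{\mathbb{H}}_p(f,\eps)(x)$ on $\overline\Omega$ for $f\in L^p(\Omega_\eps)$ (the $L^p$-continuity property from Section 2): along a maximizing sequence $x_n\to x_0\in\overline\Omega$ one passes to the limit in $\phi_\eps(x_n)\le\mu^{\mathbb{H}}_p(u_\eps,\eps)(x_n)-\mu^{\mathbb{H}}_p(v_\eps,\eps)(x_n)$ and obtains the \emph{exact} saturation $M_\eps\le\mu^{\mathbb{H}}_p(u_\eps,\eps)(x_0)-\mu^{\mathbb{H}}_p(v_\eps+M_\eps,\eps)(x_0)$ at the limit point $x_0$. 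From there the integral characterization, the monotonicity of $h(u,\lambda)=|u-\lambda|^{p-2}(u-\lambda)$ in each argument, and the injectivity of $s\mapsto|s|^{p-2}s$ give the exact rigidity $u_\eps-v_\eps=M_\eps$ a.e.\ in $B^{\mathbb{H}}_\eps(x_0)$, and an exact finite chain of balls (using connectedness and boundedness of $\Omega$) propagates this equality to $\Gamma^{\mathbb{H}}_\eps$, contradicting $u_\eps-v_\eps\le F-G$ there. No modulus, no error accumulation, and no separate treatment of the approximate-maximum issue is needed. To repair your proposal you would either have to prove your quantitative lemma (with the correct dependence on sup-norms and a separate argument for $p=\infty$) and do the iteration carefully, or switch to the continuity-of-the-mean argument above.
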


\begin{proof}
	Let $\phi_\eps=u_\eps-v_\eps$ be defined on $\Omega_\eps$. Note that $\phi_\eps\leq F-G$ on $\Gamma^{\mathbb{H}}_\eps$ and therefore $\displaystyle{\sup_{\Gamma^{\mathbb{H}}_\eps}\phi_\eps\leq\sup_{\Gamma^{\mathbb{H}}_\eps}(F-G)}$. Assume by contradiction that 
	$$M_\eps:=\sup_{\Omega}\phi_\eps>\sup_{\Gamma^{\mathbb{H}}_\eps}(F-G).$$
	Note that this implies $\sup_{\Omega_\eps}\phi_\eps=\sup_{\Omega}\phi_\eps=M_\eps$.
	By definition of $M_\eps$, there exists a sequence $x_n\in\Omega$ such that $\phi_\eps(x_n)$ converges to $M_\eps$. We can assume that, up to a subsequence, $x_n$ converges to some $x_0\in\overline{\Omega}$. Passing to the limit in the inequality
	\begin{equation*}
	\phi_\eps(x_n)\leq \mu_p^{\mathbb{H}}(u_\eps, \eps)(x_n)-\mu_p^{\mathbb{H}}(v_\eps, \eps)(x_n) \, ,
	\end{equation*}
	we obtain
	\begin{equation*}
	M_\eps\leq \mu_p^{\mathbb{H}}(u_\eps, \eps)(x_0)-\mu_p^{\mathbb{H}}(v_\eps, \eps)(x_0),
	\end{equation*}
	because the function $x\to\mu_p^{\mathbb{H}}(f,\eps)(x)$ is continuous in $\overline{\Omega}$ for $f\in L^p(\Omega_\eps)$. Therefore, 
	\begin{equation}\label{muUepsVeps}
	\begin{split}
	\mu_p^{\mathbb{H}}(u_\eps, \eps)(x_0)
	&\geq \mu_p^{\mathbb{H}}(v_\eps, \eps)(x_0)+M_\eps\\
	&=\mu_p^{\mathbb{H}}(v_\eps+M_\eps, \eps)(x_0).
	\end{split}
	\end{equation}
	Using the notation $h(u,\lambda)=|u-\lambda|^{p-2}(u-\lambda)$, we have that $h$ is increasing in $u$ for fixed $\lambda$, and decreasing in $\lambda$ for fixed $u$. Therefore, in $\Omega_\eps$, we have
	\begin{equation*}
	\begin{split}
	h\left(u_\eps,\mu_p^{\mathbb{H}}(u_\eps, \eps)(x_0)\right)
	&\leq h\left( u_\eps, \mu_p^{\mathbb{H}}(v_\eps+M_\eps, \eps)(x_0)\right)\\
	&\leq h\left(v_\eps+M_\eps, \mu_p^{\mathbb{H}}(v_\eps+M_\eps, \eps)(x_0)\right),
	\end{split}
	\end{equation*}
	because $u_\eps-v_\eps\leq M_\eps$ on $\Omega_\eps$. We can rewrite the previous inequality as
	\begin{equation*}
	h\left(u_\eps,\mu_p^{\mathbb{H}}(u_\eps, \eps)(x_0)\right)- h\left(v_\eps+M_\eps, \mu_p^{\mathbb{H}}(v_\eps+M_\eps, \eps)(x_0)\right) \leq 0
	\end{equation*}
	in $\Omega_\eps$. The integral characterization of the natural $p$-means implies that
	\begin{equation*}
	\int_{B^\mathbb{H}_\eps(x_0)} h\left(u_\eps,\mu_p^{\mathbb{H}}(u_\eps, \eps)(x_0)\right)- h\left(v_\eps+M_\eps, \mu_p^{\mathbb{H}}(v_\eps+M_\eps, \eps)(x_0)\right) =0,
	\end{equation*}
	and therefore 
	\begin{equation*}
	h\left(u_\eps,\mu_p^{\mathbb{H}}(u_\eps, \eps)(x_0)\right)- h\left(v_\eps+M_\eps, \mu_p^{\mathbb{H}}(v_\eps+M_\eps, \eps)(x_0)\right) =0 \quad\text{a.e. in}\,\,B^\mathbb{H}_\eps(x_0).
	\end{equation*}
	Since $s\to |s|^{p-2}s$ is injective, we obtain 
	\begin{equation*}
	u_\eps-\mu_p^{\mathbb{H}}(u_\eps, \eps)(x_0)= v_\eps+M_\eps- \mu_p^{\mathbb{H}}(v_\eps+M_\eps, \eps)(x_0) =0 \quad\text{a.e. in}\,\,B^\mathbb{H}_\eps(x_0),
	\end{equation*} 
	which rewrites as
	\begin{equation*}
	\begin{split}
	u_\eps-v_\eps
	&=M_\eps+\mu_p^{\mathbb{H}}(u_\eps, \eps)(x_0)- \mu_p^{\mathbb{H}}(v_\eps+M_\eps, \eps)(x_0).
	\end{split}
	\end{equation*} 
	Recalling \eqref{muUepsVeps}, we get
	$u_\eps-v_\eps\geq M_\eps =\sup_{\Omega_\eps}(u_\eps-v_\eps)$ a.e. in $B^\mathbb{H}_\eps(x_0)$, hence
	\begin{equation*}
	u_\eps-v_\eps=M_\eps\quad\text{a.e. in}\,\,B^\mathbb{H}_\eps(x_0).
	\end{equation*}
	The previous argument shows that if $x\in\bar{\Omega}$ is such that $\phi_\eps(x)=M_\eps$, then $\phi_\eps=M_\eps$ a.e. in $B^\mathbb{H}_\eps(x)$. Since $\Omega$ is bounded and connected, it is possible to find a finite chain of balls $\{B^\mathbb{H}_\eps(x_i)\}_{i=0}^N$ starting from $x_0$, such that $x_{i+1}$ is a point in $B^\mathbb{H}_\eps(x_i)$ with $\phi_\eps(x_{i+1})=M_\eps$ for all $i=0, ..., N-1$ and $B^\mathbb{H}_\eps(x_N)\cap\Gamma^{\mathbb{H}}_\eps\neq \emptyset$. This means that $\phi_\eps=M_\eps=\sup_{\Omega}\phi_\eps$ a.e. on $B^\mathbb{H}_\eps(x_N)\cap\Gamma^{\mathbb{H}}_\eps$, contradicting $\phi_\eps\leq F-G$ on $\Gamma^{\mathbb{H}}_\eps$.
\end{proof}

\section{Taylor Expansions}

The goal of this section is to prove  Proposition \ref{keylemma0}. 

First, we include details of the proof of Lemma \ref{amvph} in the
relevant case for our analysis $p>2$, because similar arguments will be used later to prove the stronger version Proposition \ref{keylemma0}.
We pay particular attention to the dependence of the estimate on  $x_0\in \overline{\Omega}$.  
\begin{proof}
	Let $p>2$ and denote $h(s)=|s|^{p-2}s$.
	Let $x_0\in\Omega$ and $0<\eps<1$. Define $u_{\eps, x_0}(z)=u(x_0*\rho_\eps(z))$ for all $z=(z_1, z_2, z_3)=(z_h,z_3)\in B$, where $B=B_1^{\mathbb{H}}(0)$ denotes the Kor\'{a}nyi ball centered at the origin with radius $1$. The Heisenberg Taylor expansion gives
	\begin{equation*}
	u_{\eps, x_0}(z)=u(x_0)+\eps\langle\nabla_{\mathbb{H}}u(x_0), z_h\rangle+\eps^2 Tu(x_0)z_3+\frac{1}{2}\eps^2\langle D^2_{\mathbb{H}} u(x_0)z_h, z_h \rangle+O(\eps^3),
	\end{equation*}
	where the constant in $O(\eps^3)$ is uniform in $x_0\in\Omega$ and depends on the $L^\infty(\Omega)$ norm of the second and third derivatives of $u$.
	Therefore, by the affine invariance and monotonicity of the natural means, it is enough to show the expansion for the quadratic function
	
	\begin{equation*}
	q_{\eps, x_0}(z)=u(x_0)+\eps\langle \nabla_{\mathbb{H}}u(x_0), z_h\rangle+\eps^2 Tu(x_0)z_3+\frac{1}{2}\eps^2\langle D^2_{\mathbb{H}} u(x_0)z_h, z_h \rangle.
	\end{equation*}
	
	\medskip
	
	{\bf 1.} First order term.
	Let 
	\begin{equation*}
	v_{\eps, x_0}(z)
	=\frac{q_{\eps, x_0}(z)-q_{\eps, x_0}(0)}{\eps}
	=\langle \nabla_{\mathbb{H}} u(x_0), z_h \rangle+\eps Tu(x_0)z_3+\frac{1}{2}\eps \langle D^2_{\mathbb{H}} u(x_0)z_h, z_h \rangle \, ,
	\end{equation*}
	and note that $v_{\eps, x_0}$ converges uniformly on $B$ to the function $v_{x_0}(z)=\langle \nabla_{\mathbb{H}} u(x_0), z_h \rangle$ as $\eps\to 0$. As a consequence,
	\begin{equation*}
	\frac{\mup(q_{\eps, x_0}, 1)(0)-q_{\eps, x_0}(0)}{\eps}=\mup(v_{\eps, x_0},1)(0)\longrightarrow \mup(v_{x_0},1)(0)=0,
	\end{equation*}
	where the last equality is due to 
	\begin{equation*}
	\int_B h(v_{x_0}(z))\dz=\int_B |\langle \nabla_{\mathbb{H}} u(x_0), z_h\rangle|^{p-2}\langle\nabla_{\mathbb{H}} u(x_0), z_h\rangle\dz=0,
	\end{equation*}
	which holds by symmetry of the integrand on $B$.
	
	\medskip
	
	{\bf 2.} Second order term.
	Let
	\begin{equation*}
	\delta_\eps(x_0)=\frac{\mup(q_{\eps, x_0},1)(0)-q_{\eps, x_0}(0)}{\eps^2}=\frac{\mup(v_{\eps,x_0},1)(0)}{\eps
	}.
	\end{equation*}
	By the integral characterization of the natural means we have
	\begin{equation}\label{firstOrderExpansion}
	\begin{split}
	0
	&= \int_B h(v_{\eps,x_0}(z)-\mup(v_{\eps,x_0},1))\dz\\
	&=\int_B h\left(\langle \nabla_{\mathbb{H}}u(x_0), z_h\rangle+\eps Tu(x_0)z_3+\frac{1}{2}\eps\langle D^2_{\mathbb{H}} u(x_0)z_h, z_h\rangle-\eps\delta_\eps(x_0)\right)\dz\\
	&= \int_B h(v_{x_0}(z))\dz +\eps\int_B \left(\int_0^1h'(F(z,x_0,\eps,t))\dt\right)  \psi(z,x_0,\eps)\dz,
	\end{split}
	\end{equation}
	where we used a first order Taylor expansion and denoted 
	\begin{equation}\label{psi}
	\psi(z,x_0,\eps)=Tu(x_0)z_3+\frac{1}{2}\langle D^2_{\mathbb{H}} u(x_0)z_h, z_h\rangle-\delta_\eps(x_0)
	\end{equation}
	and
	\begin{equation*}
	F(z,x_0,\eps,t)=\langle\nabla_{\mathbb{H}}u(x_0), z_h\rangle+t\eps\psi(z,x_0,\eps).
	\end{equation*}
	Manipulating \eqref{firstOrderExpansion}, we can explicitly compute 
	\begin{equation*}
	\delta_\eps(x_0)=\frac{\displaystyle{\int_B \int_0^1h'(F(z,x_0,\eps,t))\dt\left(Tu(x_0)z_3+\frac{1}{2}\langle D^2_{\mathbb{H}} u(x_0)z_h, z_h\rangle\right)\dz}}{\displaystyle{\int_B\int_0^1h'(F(z,x_0,\eps,t))\dt\dz}},
	\end{equation*} 
	which implies the uniform bound
	\begin{equation*}
	|\delta_\eps(x_0)|\leq \norm{Tu}_{L^{\infty}(\Omega)}+\norm{D^2_{\mathbb{H}}u}_{L^{\infty}(\Omega)}
	\end{equation*} for all $0<\eps<1$ and $x_0\in\overline{\Omega}$. Since $p>2$, by the dominated convergence theorem, up to a subsequence,  we obtain that
	\begin{equation*}
	\delta_\eps(x_0)\longrightarrow \frac{\displaystyle{\int_B\left( Tu(x_0)z_3+\frac{1}{2}\langle D^2_{\mathbb{H}} u(x_0)z_h, z_h\rangle \right) h'(\langle \nabla_{\mathbb{H}} u(x_0), z_h\rangle)\dz}}{\displaystyle{\int_B h'(\langle\nabla_{\mathbb{H}} u(x_0), z_h\rangle)\dz}}=:\delta_0(x_0).
	\end{equation*}
	 This integral can be explicitly computed (\cite{IMW17},   \cite{AKPW20}) to obtain
		\begin{equation*}
		\delta_0(x_0)=c_p\, \Delta^N_{\mathbb{H},p} u(x_0).
		\end{equation*}

\end{proof}


We show now the proof of Proposition \ref{keylemma0}.

\begin{proof}
	
	Using the notation in  the proof of Lemma \ref{amvph}, it is enough to
	show that the quantity
	$$\frac{\delta_\eps(x_0)-\delta_0(x_0)}{\eps}$$
	is uniformly bounded in $0<\eps<1$ and $x_0\in \overline{\Omega}$ .
	
	To this end, in \eqref{firstOrderExpansion} use a second order Taylor expansion to get
	\begin{equation*}
	\begin{split}
	0
	&= \int_B h'(\langle\nabla_{\mathbb{H}} u(x_0), z_h\rangle)\psi(z,x_0,\eps)\dz \\
	&+\eps\int_B  \left(\int_0^1h''(F(z,x_0,\eps,t))\dt \right)\psi^2(z,x_0,\eps) \dz.
	\end{split}
	\end{equation*}
	Therefore,
	\begin{equation*}
	\frac{\delta_\eps(x_0)-\delta_0(x_0)}{\eps} 
	= \frac{\displaystyle{\int_B \left(\int_0^1h''(F(z,x_0,\eps,t))\dt \right)\psi^2(z,x_0,\eps)\dz}}{\displaystyle{\int_B h'(\langle\nabla_{\mathbb{H}} u(x_0), z_h\rangle)\dz}}.
	\end{equation*}
	
	\medskip 
	
	First, we establish a lower bound on the denominator.
	Let $R_{x_0}$ be a rotation in the horizontal plane such that $R_{x_0}^{\tau}\nabla_{\mathbb{H}} u(x_0)=|\nabla_{\mathbb{H}} u(x_0)|(1,0) =(|\nabla_{\mathbb{H}} u(x_0)|,0)$. Observe that $B$ is invariant under the change of variables $z=R_{x_0}y$, so 
	\begin{equation*}
	\begin{split}
	\int_B h'(\langle\nabla_{\mathbb{H}} u(x_0), z_h\rangle)\dz
	&=(p-1)\int_B|\langle\nabla_{\mathbb{H}} u(x_0), z_h\rangle|^{p-2}\dz\\
	&= (p-1)|\nabla_{\mathbb{H}} u(x_0)|^{p-2}\int_B|y_1|^{p-2}\dy \\
	&\geq c_p \min_{\overline{\Omega}}|\nabla_{\mathbb{H}} u|^{p-2},
	\end{split}
	\end{equation*}
	because $0<\int_B|y_1|^{p-2}\dy <\infty$ for $p>2$.
	
	\medskip 
	
	Now we establish an upper bound on the numerator. Note that $$\psi^2(z,x_0,\eps)\leq 4\left(\norm{Tu}_{L^{\infty}(\Omega)}+\norm{D^2_{\mathbb{H}}u}_{L^{\infty}(\Omega)}\right)^2$$ for all $0<\eps<1$, $z\in B$, and $x_0\in \overline{\Omega}$. Since $h''(s)=(p-1)(p-2)|s|^{p-4}s$, it is enough to estimate the integral
	\begin{equation*}
	\begin{split}
	\int_B \int_0^1 \left\lvert \langle\nabla_{\mathbb{H}}u(x_0), z_h\rangle+t\eps\psi(z,x_0,\eps) \right\rvert^{p-3} \dt\dz.
	\end{split}
	\end{equation*}
	After performing the change of variables $z=R_{x_0}y$, where $R_{x_0}$ is the same rotation as above, the integral becomes
		\begin{equation*}
	\begin{split}
	&\int_B \int_0^1 \Big\lvert y_1|\nabla_{\mathbb{H}}u(x_0)|+t\eps\psi(R_{x_0}y,x_0,\eps) \Big\rvert^{p-3} \dt\dy\\
	&=|\nabla_{\mathbb{H}}u(x_0)|^{p-3}\int_B \int_0^1 | y_1+t\eps \Psi(y,x_0,\eps) |^{p-3} \dt\dy,
	\end{split}
	\end{equation*}
	where we denoted 
	\begin{equation*}
	\begin{split}
	\Psi(y,x_0,\eps)&=|\nabla_{\mathbb{H}}u(x_0)|^{-1}\psi(R_{x_0}y,x_0,\eps)\\
	&=|\nabla_{\mathbb{H}}u(x_0)|^{-1}\left(Tu(x_0)y_3+\frac{1}{2}\langle R_{x_0}^{\tau} D^2_{\mathbb{H}} u(x_0)R_{x_0}y_h, y_h\rangle-\delta_\eps(x_0)\right) .
	\end{split}
	\end{equation*}
	Observe that 
	\begin{equation*}
	|\Psi(y,x_0,\eps)|\leq \frac{2\norm{Tu}_{L^{\infty}(\Omega)}+\norm{D^2_{\mathbb{H}}u}_{L^{\infty}(\Omega)}}{{\min_{\overline{\Omega}}|\nabla_{\mathbb{H}} u|}}=c_\Psi
	\end{equation*}
	for all $y\in B$, $0<\eps<1$ and $x_0\in \overline{\Omega}$.
	
	For fixed $\eps$, $t\in[0,1]$ and $x_0\in \overline{\Omega}$, consider the change of variables $\zeta=\Phi_{\eps,t,x_0}(y)=\Phi_\eps(y)$, given by
	\begin{equation*}
	\begin{cases}
	\zeta_1=y_1+\eps t \Psi(y,x_0,\eps)\\
	\zeta_i=y_i \quad i=2,3
	\end{cases}
	\end{equation*}
	for $y\in B$.
	
	\medskip
	We claim that there exists $\hat{\eps}>0$ such that for all $0<\eps<\hat{\eps}$ the map $\Phi_\eps: B\longrightarrow \Phi_\eps(B)$ is a diffeomorphism with $\min_B \mathcal{J}\Phi_\eps\geq \frac{1}{2}$.
	More exactly, $\hat{\eps}$ can be taken to be 
	\begin{equation*}
	\hat{\eps}=\frac{1}{2}\left( \frac{\norm{Tu}_{L^{\infty}(\Omega)}+\norm{D^2_{\mathbb{H}}u}_{L^{\infty}(\Omega)}}{\min_{\overline{\Omega}}|\nabla_{\mathbb{H}} u|}+1 \right)^{-1} \, ,
	\end{equation*}
	so it is independent of $x_0\in \overline{\Omega}$.
	Indeed, the Euclidean Jacobian of the transformation is 
	\begin{equation*}
	\mathcal{J}\Phi_\eps(y) =I_{3\times 3}+\eps t \begin{pmatrix}
	\nabla_y \Psi(y, x_0, \eps)\\
	0\\
	0
	\end{pmatrix},
	\end{equation*}
	which implies 
	\begin{equation*}
	|\mathcal{J}\Phi_\eps(y)|=|1+\eps t \partial_{y_1} \Psi(y, x_0, \eps)|\geq \frac{1}{2}
	\end{equation*}
	for all $y\in B$, $0<\eps<\hat{\eps}$ and $x_0\in \overline{\Omega}$, because 
	\begin{equation*}
	\norm{\partial_{y_1} \Psi(\cdot, x_0, \eps)}_{L^\infty(\Omega)}\leq \frac{\norm{Tu}_{L^{\infty}(\Omega)}+\norm{D^2_{\mathbb{H}}u}_{L^{\infty}(\Omega)}}{\min_{\overline{\Omega}}|\nabla_{\mathbb{H}} u|}.
	\end{equation*}
	Moreover, for all $0<\eps<\hat{\eps}$ and $\eta$, $\xi\in B$ we have
	\begin{equation*}
	\begin{split}
	|\Phi_\eps(\xi)-\Phi_\eps(\eta)|\geq |\xi-\eta|-\eps t |\Psi(\xi, x_0, \eps)-\Psi(\eta, x_0, \eps)|
	\geq \frac{1}{2}|\xi-\eta|,
	\end{split}
	\end{equation*}
	because $ |\Psi(\xi, x_0, \eps)-\Psi(\eta, x_0, \eps)|\leq \norm{\nabla_{y} \Psi(\cdot, x_0, \eps)}_{L^\infty(\Omega)}|\xi-\eta| $. This concludes the proof of the claim.
	
	\medskip
	
	Now, for $y\in B$, $\zeta\in\Phi_\eps(B)$ we have $|\zeta_2|=|y_2|\leq 1$, $|\zeta_3|=|y_3|\leq 1$ and $|\zeta_1|=|y_1+\eps t \Psi(y, x_0, \eps)|\leq |y_1|+c_\Psi\eps \leq 1+c_\Psi\eps $, therefore
	\begin{equation*}
	\begin{split}
	\int_B |y_1+\eps t \Psi(y, x_0, \eps)|^{p-3}\dy
	&= \int_{\Phi_\eps(B)}|\zeta_1|^{p-3} |\mathcal{J}\Phi_\eps(\Phi_\eps^{-1}(\zeta))|^{-1}\,\mbox{d}\zeta\\
	&\leq 2 \int_{\Phi_\eps(B)}|\zeta_1|^{p-3} ,\mbox{d}\zeta\\
	&\leq 2 \int_{\{|\zeta_1|\leq 1+c_\Psi\eps \}} |\zeta_1|^{p-3} \,\mbox{d}\zeta_1\\
	&=4(1+c_B\eps )^{p-2},
	\end{split}
	\end{equation*}
	since $p>2$.
\end{proof}

\section{Convergence in the case of $p$-harmonic data}

In this section we  prove Lemma \ref{perturbation0} and Proposition \ref{Prop:convergence}. 

In Lemma \ref{perturbation0} we show that we can obtain supersolutions and subsolutions of the DPP \eqref{eq:expH^10} by small perturbations of $p$-harmonic functions with non-vanishing horizontal gradient. This will be used as a first step towards proving convergence of the approximation scheme to the appropriate solution of the Dirichlet problem for the $p$-Laplace equation, in the case its boundary datum is $p$-harmonic.

First we show the proof of Lemma \ref{perturbation0}.

\begin{proof}
	Up to a left translation, we can assume that $\Omega$ does not intersect the cylinder $$\{(x_1,x_2,x_3)\in\mathbb{H}\,\,|\,\, x_1^2+x_2^2<1\},$$ so we can choose $q_0=0$ and $v(x)=|{x}|_{\mathbb{H}}^s$.
	{ From the calculations in the proof of Theorem 12.1 in \cite{LMR}} there exist $s\geq 4$ and $\hat{\eps}$ such that 
	\begin{equation*}
	\Delta^N_{\mathbb{H},p} (U+\eps v)\geq s\eps \quad \text{in}\,\,\Omega
	\end{equation*}
	for all $0<\eps<\hat{\eps}$.
	From the expansion in Proposition \ref{keylemma0}, there exists $C>0$ such that
	\begin{equation*}
	\begin{split}
	(U+\eps v)(x) 
	&\leq \mu_p^{\mathbb{H}}(U+\eps v, \eps)(x)-c_p\eps^2\Delta^N_{\mathbb{H},p}(U+\eps v)(x)+C\eps^3\\
	&\leq \mu_p^{\mathbb{H}}(U+\eps v, \eps)(x)-\eps^3(c_ps-C)\\
	&\leq \mu_p^{\mathbb{H}}(U+\eps v, \eps)(x)
	\end{split}
	\end{equation*}
	for all $x\in\Omega$ and $0<\eps<\hat{\eps}$, provided we choose $s>C/c_p$ and further restrict $\hat{\eps}$. 
	
	Analogous computations give
	$\Delta^N_{\mathbb{H},p} (U-\eps v)\leq -s\eps$ in $\Omega$, which, again by Proposition \ref{keylemma0}, implies $(U+\eps v)(x) \leq \mu_p^{\mathbb{H}}(U+\eps v, \eps)(x)$ for all $x\in\Omega$ and an appropriate choice of $s$.
\end{proof}

Next comes the proof of Proposition \ref{Prop:convergence}.
\begin{proof}
	By Lemma \ref{perturbation0}, there exist $\hat{\eps}>0$, $s\geq 4$ and $q_0\in\mathbb{H}$ such that, denoting $v(x)=|q_0^{-1}*x|_{\mathbb{H}}^s$ we have that $U+\eps v$ is a subsolution and $U-\eps v$ is a supersolution of \eqref{eq:expH^10} in $\Omega_\eps$ for all $0<\eps<\hat{\eps}$.
	By the comparison principle in Lemma \ref{UpgradedComparison} we get
	\begin{equation*}
	U+\eps v \leq u_\eps +\eps\sup_{\Gamma^{\mathbb{H}}_\eps}(v)
	\end{equation*}
	and
	\begin{equation*}
	u_\eps\leq U-\eps v +\eps\sup_{\Gamma^{\mathbb{H}}_\eps}(v)
	\end{equation*}
	on $\Omega_\eps$, for all $0<\eps<\hat{\eps}$	. Therefore, 
	\begin{equation*}
	|u_\eps-U|\leq -\eps v+\eps\sup_{\Gamma^{\mathbb{H}}_\eps}(v) \leq 2\eps \sup_{\Omega_1}v
	\end{equation*}
	on $\Omega$, so 
	\begin{equation*}
	\norm{u_\eps-U}_{L^\infty(\Omega)}\leq C_\Omega \eps,
	\end{equation*}
	where $C_\Omega=2 \sup_{\Omega_1}v$. This uniform bound concludes the proof.
\end{proof}

\section{Boundary Estimate}

In this section we prove Theorem \ref{TheoremBoundary}.

\begin{proof} Fix $\eta>0$.	We prove the upper bound $u_{\eps} (x) \leq G(y) + \eta$, the proof of the lower one being analogous.
	 By uniform continuity, there exists $\delta\in(0,\bar{\delta})$ such that
	\begin{equation}\label{GxGy}
	G(x)\leq G(y)+\frac{\eta}{2} \quad\text{for all}\,\, y\in\partial\Omega, \, x\in B^\mathbb{H}_{5\delta}(y)\cap \Gamma^{\mathbb{H}}_\eps \,\,\text{and}\,\, 0<\eps<1.
	\end{equation}
	
	Introducing the notations 
	\begin{equation}\label{Beps}
	N^\eps(y)=N^{\eps, \delta}(y) :=\sup_{B^\mathbb{H}_{5\delta}(y)\cap \Gamma^{\mathbb{H}}_\eps}G  \quad
	\text{and}\quad 
	M^\eps :=\sup_{ \Gamma^{\mathbb{H}}_\eps}G,
	\end{equation}
	inequality \eqref{GxGy} rewrites as
	\begin{equation}\label{BF}
	N^\eps(y)\leq G(y)+\frac{\eta}{2}\quad\text{for all}\,\, y\in \partial\Omega\, \,\text{and}\,\,0<\eps<1.
	\end{equation}
	
	Set $\xi=\frac{4- p}{ p-1}$ and
	\begin{equation}\label{theta}
	\theta = \frac{1-\frac{1}{2}\left(\frac{\mu}{2-\mu}\right)^{\xi}-\frac{1}{2}\left(\frac{\mu}{2}\right)^\xi}{1-\left(\frac{\mu}{2}\right)^\xi}\in(0,1).
	\end{equation}
	
	Note that $\theta$ depends only on $\mu$ and $ p$.
	For $k\geq 0$ define 
	\begin{equation}\label{deltak}
	\delta_{k}=\frac{\delta}{4^{k-1}}
	\end{equation}  
	and
	\begin{equation}\label{Meps}
	M_k^\eps(y)=N^\eps(y)+\theta^k(M^\eps-N^\eps(y)).
	\end{equation}
	
	We have the following
	
	\medskip
	{\bf Claim} 
	Let $\eps_k>0$ and suppose that for all $\eps\in(0,\eps_k)$ we have
	\begin{equation*}
	u_\eps \leq M_k^\eps(y) \quad \text{in}\quad B^\mathbb{H}_{\delta_k}(y)\cap \Omega.
	\end{equation*}
	Then, there exists $\eps_{k+1}=\eps_{k+1}(\eta, \mu,\delta, p)\in(0,\eps_k)$  such that
	\begin{equation*}
	u_\eps \leq M_{k+1}^\eps(y) \quad \text{in}\quad B^\mathbb{H}_{\delta_{k+1}}(y)\cap \Omega
	\end{equation*}
	for all $\eps\in(0, \eps_{k+1})$. 
	
	\medskip
	
	Using the claim above, since $u_\eps\leq M^\eps=M_0^\eps$ in $\Omega_\eps$ for all $\eps\in(0,1)$, we can find $\eps_1>0$ such that $u_\eps\leq M_{1}^\eps(y)$ in $B^\mathbb{H}_{\delta_1}(y)\cap\Omega$ for $\eps\leq\eps_1$. We now repeat this process, and after $k_0$ iterations we find $\eps_{k_0}>0$ such that  $u_\eps\leq M_{k_0}^\eps(y)$ in $B^\mathbb{H}_{\delta_{k_0}}(y)\cap\Omega$ for $\eps\in(0,\eps_{k_0})$. 
	Choosing $k_0\in\mathbb{N}$ such that
	$$k_0>\log_{\theta}\left(\frac{\eta}{2}\left(\sup_{\Gamma^{\mathbb{H}}_1} G-\inf_{\Gamma^{\mathbb{H}}_1} G+1\right)^{-1}\right)$$
	we have
	\begin{equation}\label{k0}
	M_{k_0}^\eps(y)-N^\eps(y)=\theta^{k_0}(M^\eps-N^\eps(y))\leq \frac{\eta}{2},
	\end{equation}
	for all $y\in\partial\Omega$, because $\displaystyle{M^\eps\leq \sup_{\Gamma^{\mathbb{H}}_\eps} G}$ and $\displaystyle{N^\eps(y)\geq \inf_{\Gamma^{\mathbb{H}}_\eps} G}$.
	Combining with \eqref{BF} we get
	$$u_\eps\leq G(y)+\eta\quad\text{in}\quad B^\mathbb{H}_{\delta_{k_0}}(y)\cap\Omega,$$ for all $\eps\in(0,\eps_{k_0})$.
	
	
	To conclude, we now provide a sketch of the proof of the claim.
	The key properties of having radial fundamental solutions in annular domains for $\Delta^N_{\mathbb{H},p}$ with non-vanishing horizontal gradient, 
	together with the comparison principle for solutions of \eqref{eq:expH^10}, the uniform convergence in Proposition \ref{Prop:convergence} and the fact that a genuine triangle inequality holds for the Kor\'{a}nyi norm, allow us to adapt the proof of Lemma 4.4 of \cite{dTMP20} to the current setting with minor modifications.

	{Proof of claim:}\\
	{\bf  1.}
	Since $\Omega$ satisfies the exterior $\mathbb{H}$-corkscrew condition, there exists a sequence of balls $B^\mathbb{H}_{\mu \delta_{k+1}}(z_k)$ strictly contained in $B^\mathbb{H}_{\delta_{k+1}}(y)\setminus \Omega$ for all $k\in\mathbb{N}$. To simplify the notation, let $N=N^\eps(y)$ and $M_k=M^\eps_k(y)$.
	The following boundary value problem
	\begin{equation*}
	\begin{cases}
	-\Delta^N_{\mathbb{H},p} U_k = 0 &\quad \text{in}\quad B^\mathbb{H}_{\delta_k}(z_k)\setminus \overline{B^\mathbb{H}_{\mu\delta_{k+1}}(z_k)} \\
	U_k = N &\quad \text{on}\quad \partial B^\mathbb{H}_{\mu\delta_{k+1}}(z_k)\\
	U_k = M_k &\quad \text{on}\quad \partial B^\mathbb{H}_{\delta_{k}}(z_k)
	\end{cases}
	\end{equation*}
	has the radial solution
	\begin{equation*}
	U_k(x)=
	\begin{dcases}
	\frac{a_k}{|z_k^{-1}*x|_{\mathbb{H}}^{\xi}}+b_k &\quad\text{if}\,\, p\neq 4\\
	a_k\log(|z_k^{-1}*x|_{\mathbb{H}})+b_k &\quad\text{if}\,\, p= 4
	\end{dcases},
	\end{equation*}
	for suitable coefficients $a_k$ and $b_k$.  Denote by $\tilde{\Omega}$ the annulus  $B^\mathbb{H}_{\delta_k}(z_k)\setminus \overline{B^\mathbb{H}_{\mu\delta_{k+1}}(z_k)}$. Choosing $\eps_{k+1}$ sufficiently small, for $0<\eps<\eps_{k+1}$ we let $U_k^\eps$  be the solution of the problem \eqref{eq:expH^10} in $\tilde{\Omega}$ with boundary value $U_k$. Since $\nabla_{\mathbb{H}}U_k\neq 0$ in $\tilde{\Omega}_\eps$, Proposition \ref{Prop:convergence} implies that $U_k^\eps$ converges uniformly to $U_k$ in $\tilde{\Omega}$.
	
	\medskip
	
	{\bf 2.}
	Consider now $B^\mathbb{H}_{\delta_k/2}(z_k)$. Due to its radial nature, we have that $U_k\geq \alpha M_k+\beta N$ on $B^\mathbb{H}_{\delta_k/2}(z_k)$, for appropriate coefficients satisfying $\alpha+\beta=1$. By hypothesis, $\alpha u_\eps+\beta N\leq \alpha M_k+\beta N$ on the $\eps$-boundary of $B^\mathbb{H}_{\delta_k/2}(z_k)\cap\Omega$. Using the fact that  $U_k^\eps$ is uniformly close to $U_k$ and the comparison principle for solutions of \eqref{eq:expH^10}, given $\Gamma^{\mathbb{H}}>0$ we get that 
	\begin{equation}\label{alphabetaU}
	\alpha u_\eps+\beta N\leq U_k+2\Gamma^{\mathbb{H}} \quad\text{on}\,\,B^\mathbb{H}_{\delta_k/2}(z_k)\cap\Omega,
	\end{equation}
	for $\eps\in(0,\eps_{k+1})$ sufficiently small.
	\medskip
	
	{\bf 3.}
	Now consider $B^\mathbb{H}_{\delta_{k+1}}(y)$. Because a genuine triangle inequality holds for the Kor\'{a}nyi gauge, we have $B^\mathbb{H}_{\delta_{k+1}}(y) \subset B^\mathbb{H}_{\delta_{k}/2}(z_k)$ (so \eqref{alphabetaU} holds in this ball) and $B^\mathbb{H}_{\delta_{k+1}}(y) \subset B^\mathbb{H}_{(2-\mu)\delta_{k+1}}(z_k)$. We have that $U_k\leq\beta 'N+\alpha ' M_k$ on $B^\mathbb{H}_{(2-\mu)\delta_{k+1}}(z_k)$ for suitable coefficients satisfying $\alpha'+\beta'=1$. Combining with \eqref{alphabetaU}, in $B^\mathbb{H}_{\delta_{k+1}}(y)\cap\Omega$ and for $\eps\in(0,\eps_{k+1})$ we have
	\begin{equation*}
	\begin{split}
	u_\eps&\leq 
	\frac{\beta ' -\beta}{\alpha}N+\frac{\alpha '}{\alpha}M_k+\frac{2\Gamma^{\mathbb{H}}}{\alpha}\\
	&=N+\theta(M_k-N)=M_{k+1},
	\end{split}
	\end{equation*}
	with an appropriate choice of $\Gamma^{\mathbb{H}}$. Keeping track of the coefficients, whose expressions can be explicitly computed, we find that $\theta$ turns out to be as in \eqref{theta}.
\end{proof}


\section{The proof of Theorem \ref{maintheorem}}
Recall the notion of viscosity solutions of the $p$-Laplace equation
\begin{equation}\label{NormalizedPLaplacianEquation}
-\Delta^N_{\mathbb{H},p} v = 0 \quad\text{in}\quad\Omega.
\end{equation}

\begin{definition}
	Let $\Omega\subset\mathbb{H}$ be open and bounded. A bounded upper semicontinuous function $v:\Omega\longrightarrow\mathbb{R}$ is a viscosity subsolution of \eqref{NormalizedPLaplacianEquation} if for every $x_0\in\Omega$ and for every $\phi\in C^2(\Omega)$ such that $v-\phi$ has a strict maximum at $x_0$, $v(x_0)=\phi(x_0)$ and $\nabla_{\mathbb{H}}{\phi}(x_0)\neq 0$, it holds $$-\Delta^N_{\mathbb{H},p} \phi(x_0)\leq 0.$$

	A bounded lower semicontinuous function $v:\Omega\longrightarrow\mathbb{R}$ is a viscosity supersolution of \eqref{NormalizedPLaplacianEquation} if
 for every $x_0\in\Omega$ and for every $\phi\in C^2(\Omega)$ such that $v-\phi$ has a strict minimum at $x_0$, $v(x_0)=\phi(p_0)$ and $\nabla_{\mathbb{H}}{\phi}(x_0)\neq 0$, it holds $$-\Delta^N_{\mathbb{H},p} \phi(x_0)\geq 0.$$

	Finally, a viscosity solution of \eqref{NormalizedPLaplacianEquation} is a function which is both a  viscosity subsolution and a viscosity supersolution of \eqref{NormalizedPLaplacianEquation}. 
\end{definition}

We now study the convergence of $u_\eps$ as $\eps$ goes to zero, where $u_\eps$ is the solution of \eqref{eq:expH^10} with boundary value $G\in C(\Gamma^{\mathbb{H}}_1)$. For $x\in\Omega$ define 
\begin{equation}\label{barU}
\begin{split}
\overline{u}(x):&=\limsup_{\substack{\eps\to 0^+\\ q\to x }} u_\eps(q),\\
\underline{u}(x):&=\liminf_{\substack{\eps\to 0^+\\ q\to x }} u^\eps(q)
\end{split}
\end{equation}
Note that $\overline{u}$ is upper semicontinuous and $\underline{u}$ is lower semicontinuous.

\begin{theorem}
	Let $\overline{u}$ and $\underline{u}$ be as in \eqref{barU}. Then $\overline{u}$ is a viscosity subsolution and $\underline{u}$ is a viscosity supersolution of $-\Delta^N_{\mathbb{H},p} u = 0$ in $\Omega$.
\end{theorem}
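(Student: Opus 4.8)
The plan is to show that $\overline{u}$ is a viscosity subsolution (the argument for $\underline{u}$ being a supersolution being symmetric). This is a standard consistency-plus-stability argument in the spirit of Barles--Souganidis, adapted to the DPP \eqref{eq:expH^10} via the asymptotic expansion in Lemma \ref{amvph}. First I would fix $x_0\in\Omega$ and a test function $\phi\in C^2(\Omega)$ such that $\overline{u}-\phi$ has a strict maximum at $x_0$, with $\overline{u}(x_0)=\phi(x_0)$ and $\nabla_{\mathbb{H}}\phi(x_0)\neq 0$. We may assume $\phi$ is smooth near $x_0$ (mollify) and that $\nabla_{\mathbb{H}}\phi\neq 0$ in a neighborhood of $x_0$, so Lemma \ref{amvph} applies to $\phi$ there.

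Next I would extract approximating maximum points: by definition of $\overline{u}$ as a $\limsup$, there exist sequences $\eps_j\to 0^+$ and $q_j\to x_0$ with $u_{\eps_j}(q_j)\to\overline{u}(x_0)$. A routine argument (using that $x_0$ is a \emph{strict} maximum of $\overline{u}-\phi$, the upper semicontinuity of $\overline u$, and the local uniform boundedness of $\{u_\eps\}$) produces points $x_j\to x_0$ at which $u_{\eps_j}-\phi$ attains its maximum over a fixed small ball $\overline{B^{\mathbb{H}}_{r}(x_0)}$, and moreover $u_{\eps_j}(x_j)\to\phi(x_0)$ and $(u_{\eps_j}-\phi)(x_j)\to 0$. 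For $j$ large, $B^{\mathbb{H}}_{\eps_j}(x_j)\subset B^{\mathbb{H}}_r(x_0)$, so on that small ball $u_{\eps_j}\le \phi+(u_{\eps_j}-\phi)(x_j)$.

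Then I would use monotonicity and affine invariance of the natural $p$-means together with the DPP: since $u_{\eps_j}$ solves \eqref{eq:expH^10} at $x_j\in\Omega$,
\begin{equation*}
u_{\eps_j}(x_j)=\mu^{\mathbb{H}}_p(u_{\eps_j},\eps_j)(x_j)\le \mu^{\mathbb{H}}_p\big(\phi+(u_{\eps_j}-\phi)(x_j),\eps_j\big)(x_j)=\mu^{\mathbb{H}}_p(\phi,\eps_j)(x_j)+(u_{\eps_j}-\phi)(x_j),
\end{equation*}
which gives $\phi(x_j)\le \mu^{\mathbb{H}}_p(\phi,\eps_j)(x_j)$. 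Plugging in the expansion from Lemma \ref{amvph}, $\mu^{\mathbb{H}}_p(\phi,\eps_j)(x_j)=\phi(x_j)+c_p\,\eps_j^2\Delta^N_{\mathbb{H},p}\phi(x_j)+o(\eps_j^2)$, we obtain $0\le c_p\,\eps_j^2\Delta^N_{\mathbb{H},p}\phi(x_j)+o(\eps_j^2)$. Dividing by $c_p\,\eps_j^2>0$, sending $j\to\infty$, and using continuity of $\Delta^N_{\mathbb{H},p}\phi$ at $x_0$ (valid because $\nabla_{\mathbb{H}}\phi(x_0)\neq 0$) yields $\Delta^N_{\mathbb{H},p}\phi(x_0)\ge 0$, i.e. $-\Delta^N_{\mathbb{H},p}\phi(x_0)\le 0$, as required.

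The main obstacle I anticipate is the uniformity of the $o(\eps^2)$ term in Lemma \ref{amvph}: one must be sure the error estimate holds with a modulus depending only on the $C^2$ (or $C^3$) norm of $\phi$ and a lower bound on $|\nabla_{\mathbb{H}}\phi|$ on the fixed neighborhood, not on the moving center $x_j$ — but this is exactly the uniform-in-$x_0$ statement already recorded in Lemma \ref{amvph} and Proposition \ref{keylemma0}. A secondary technical point is the extraction of the maximum points $x_j$ with the correct limiting behavior $(u_{\eps_j}-\phi)(x_j)\to 0$; this is the classical lemma on half-relaxed limits (e.g. as in the Barles--Souganidis framework), and it requires nothing beyond upper semicontinuity of $\overline u$, the strictness of the maximum, and a local uniform bound on $\|u_\eps\|_{L^\infty}$ near $x_0$, which follows from the stability of the $p$-means. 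No use of the comparison principle or boundary regularity is needed here, since the statement is purely interior.
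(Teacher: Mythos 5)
Your proposal is correct and follows essentially the same Barles--Souganidis-type argument as the paper: extract near-extremal points of $u_{\eps_j}-\phi$ converging to $x_0$ via the strict maximum, use the DPP together with monotonicity and affine invariance of $\mu^{\mathbb{H}}_p$, and conclude with the uniform expansion of Lemma \ref{amvph}. The only cosmetic difference is that you take exact maximizers over $\overline{B^{\mathbb{H}}_r(x_0)}$ (legitimate, since $u_\eps\in C(\Omega)$), whereas the paper works with $\eps_n^3$-approximate minimizers of $\phi-u_{\eps_n}$.
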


\begin{proof}
	We give a proof for $\overline{u}$, the one for $\underline{u}$ being analogous. Take $x_0\in\Omega$ and a test function $\phi\in C^2(\Omega)$ with $\nabla_{\mathbb{H}}\phi(x_0)\neq 0$ that touches $\overline{u}$ at $x_0$ from above, i.e. $
	\overline{u}(x_0)=\phi(x_0)$ and $\phi-\overline{u}$ has a strict minimum at $x_0$.
	By definition of $\overline{u}$ there exist $\eps_n\to 0^+$ and $x_n\to x_0$ such that
	\begin{equation}\label{sequence}
	u_{\eps_n}(x_n)\longrightarrow \overline{u}(x_0).
	\end{equation}
	Fix $B^\mathbb{H}_r(x_0)\subset\Omega$ so that $\nabla_{\mathbb{H}}\phi\neq 0$ on this set.
	By definition of infimum, for every $n\in\mathbb{N}$ there exists $\overline{x}_{n}\in B^\mathbb{H}_r(x_0)$ such that
	\begin{equation}\label{almostMinimizing}
	\phi(\overline{x}_n)-u_{\eps_n}(\overline{x}_n) 
	\leq \inf_{B^\mathbb{H}_r(x_0)}(\phi - u_{\eps_n})+\eps_n^3.
	\end{equation}
	
	By possibly passing to a subsequence, we can assume $\overline{x}_n\longrightarrow \overline{x}_0$ for some $\overline{x}_0\in \overline{B^\mathbb{H}_r(x_0)}$.
	We get
	\begin{equation*}
	\begin{split}
	\phi(\overline{x}_0)-\overline{u}(\overline{x}_0)
	&=\liminf_{\substack{\eps\to 0^+\\ q\to \overline{x}_0 }}(\phi(q)-u_\eps(q))\\
	&\leq \liminf_{n\to\infty} \left(\phi(\overline{x}_n)-u_{\eps_n}(\overline{x}_n)\right)\\
	&\leq \liminf_{n\to\infty}\left(\inf_{B^\mathbb{H}_r(x_0)}(\phi-u_{\eps_n})+\eps_n^3\right)\\
	&\leq \liminf_{n\to\infty} \left(\phi(x_n)-u_{\eps_n}(x_n)+\eps_n^3\right)\\
	&=\phi(x_0)-\overline{u}(x_0),
	\end{split}
	\end{equation*}
	where we used \eqref{sequence} and \eqref{almostMinimizing} respectively in the third and second lines.
	As a consequence $\overline{x}_0=x_0$, because $\phi-\overline{u}$ has a strict minimum at $x_0$. 
	Note that inequality \eqref{almostMinimizing} rewrites as
	$$ u_{\eps_n}(x)-u_{\eps_n}(\overline{x}_n)\leq \phi(x)-\phi(\overline{x}_n)+\eps_n^3 \quad \text{for all}\;\; x\in B^\mathbb{H}_r(x_0).$$
	For $n$ large enough, we have $\eps_n<r/2$ and $\overline{x}_n\in B^\mathbb{H}_{r/2}(x_0)$ so that
	$B^\mathbb{H}_{\eps_n}(\overline{x}_n)\subseteq B^\mathbb{H}_r(x_0)$. Therefore
	\begin{equation*}
	\begin{split}
	\mu_p(u_{\eps_n}, \eps_n)(\overline{x}_n)-u_{\eps_n}(\overline{x}_n)
	&\leq \mu_p(\phi,\eps_n)(\overline{x}_n)-\phi(\overline{x}_n)+\eps_n^3\\
	&=c\eps_n^2 \Delta^N_{\mathbb{H},p}  \phi(\overline{x}_n)
	+o(\eps_n^2),
	\end{split}
	\end{equation*}
where we used Lemma \ref{amvph}. We remark that the proof of this Lemma shows that the error in the above expansion is uniform on the set where $\phi$ has nonvanishing horizontal gradient. Since $\nabla_{\mathbb{H}}\phi\neq 0$ on $B^\mathbb{H}_r(x_0)$, such error is independent of $x_n$.
	By definition of $u_{\eps_n}$ we get
	$$ 0\leq \Delta^N_{\mathbb{H},p}  \phi(\overline{x}_n)
	+o(1), $$
	which, after taking the limit as $n\to\infty$, concludes the proof for $\overline{u}$.
\end{proof}
We finalize now the proof of Theorem \ref{maintheorem}.

\begin{proof}
By Theorem \ref{TheoremBoundary}, for $y\in\partial\Omega$ we have
\begin{equation*}
\begin{split}
\limsup_{\Omega\ni x\to y}\overline{u}(x)\leq G(y)\leq \liminf_{\Omega\ni x\to y}\underline{u}(x).
\end{split}
\end{equation*} 
Using the comparison principle for viscosity solutions of the $p$-Laplace equation in $\mathbb{H}$ (see \cite{Bieske}),  we get $\overline{u}\leq \underline{u}$ in $\Omega$. 
 Since trivially  $\overline{u}\geq \underline{u}$, we have that $\overline{u}= \underline{u}:=u$ is a viscosity solution of $-\Delta^N_{\mathbb{H},p} u = 0$ in $\Omega$. Moreover, $u$ attains the appropriate boundary value and therefore, by a compactness argument, Theorem  \ref{maintheorem} follows.
 \end{proof}

\bibliographystyle{alpha}
\bibliography{DMRSDec2020}

\end{document}